\documentclass[14pt]{amsart}
\usepackage[margin=3cm]{geometry}
\usepackage{stmaryrd}

\usepackage{microtype} 

\usepackage{amsthm,amsmath,amssymb,amsfonts}
\usepackage[alphabetic]{amsrefs}
\usepackage[skip=4pt plus1pt, indent=10pt]{parskip}
\usepackage{enumerate}
\usepackage{xcolor}
\usepackage{tikz}
\usepackage[colorlinks,bookmarks, anchorcolor=blue, citecolor=blue, linkcolor=blue,urlcolor=blue, bookmarksopenlevel=1]{hyperref}
\usepackage{tikz-cd}

\newtheorem{theorem}{Theorem}[section]
\newtheorem{proposition}[theorem]{Proposition}
\newtheorem{lemma}[theorem]{Lemma}
\newtheorem{corollary}[theorem]{Corollary}

\theoremstyle{definition}
\newtheorem{definition}[theorem]{Definition}
\newtheorem{remark}[theorem]{Remark}

\begin{document}\baselineskip=15pt

\begin{center}
\title[Finite generation and slope of divisors on moduli of cubics]{Finite generation of Noether--Lefschetz divisors and the slope of the moduli space of cubic fourfolds}

\author{Ignacio Barros}
\address{\parbox{0.9\textwidth}{
Department of Mathematics\\[1pt]
Universiteit Antwerpen\\[1pt]
Middelheimlaan 1, 2020 Antwerpen, Belgium
\vspace{1mm}}}
\email{{ignacio.barros@uantwerpen.be}}

\author{Shi He}
\address{\parbox{0.9\textwidth}{
Department of Mathematics\\[1pt]
Universiteit Antwerpen\\[1pt]
Middelheimlaan 1, 2020 Antwerpen, Belgium
\vspace{1mm}}}
\email{{shi.he@uantwerpen.be}}

\author{Paul Kiefer}
\address{\parbox{0.9\textwidth}{
Department of Mathematics\\[1pt]
Universiteit Antwerpen\\[1pt]
Middelheimlaan 1, 2020 Antwerpen, Belgium
\vspace{1mm}}}
\email{{paul.kiefer@uantwerpen.be}}

\subjclass[2020]{14J15, 11F27, 14J28, 14C20, 14C22, 14E30.}
\keywords{Cubic fourfolds, K3 surfaces, moduli spaces, Noether--Lefschetz cycles.}
\thanks{I.B. was supported by the Research Foundation – Flanders (FWO) project number G0D9323N and by the Deutsche Forschungsgemeinschaft (DFG) - Project-ID 491392403 – TRR 358. P.K. was supported by the Research Foundation – Flanders (FWO) project number G0D9323N. S. H. was supported by the Deutsche Forschungsgemeinschaft (DFG) - Project-ID 491392403 – TRR 358.  
}

\maketitle
\end{center}

\begin{abstract}
We study divisors on moduli spaces of cubic fourfolds with simple singularities and of quasi-polarized K3 surfaces of degree $2d$. For the moduli space of cubic fourfolds, we introduce a slope quantity to characterize the effective cone and prove an explicit bound for it. For the K3 moduli spaces, we give an explicit finite presentation of the rational Picard group by showing that it is generated by Noether–Lefschetz divisors of discriminant less than or equal to $4d$. As a byproduct, we obtain two explicit expressions for the Hodge class in terms of Noether--Lefschetz divisors, and we indicate analogous results for higher-codimension Noether--Lefschetz cycles.
\end{abstract}

\setcounter{tocdepth}{1} 

\section{Introduction}

The study of the geometry of divisors on moduli spaces has been a guiding force in the subject. One among the features that distinguishes moduli spaces coarsely represented by orthogonal Shimura varieties $\mathcal{D}\big/\Gamma$, such as those of K3 surfaces, hyperk\"{a}hler varieties, or cubic fourfolds, is that they naturally come with a countable collection of modular divisors called {\it{Noether--Lefschetz divisors}}. They not only provide a rich supply of effective divisors, but (under mild hypotheses) they generate the rational Picard group of the corresponding moduli spaces, see \cite{MP13}*{Conjecture 3} and \cites{GLT15,BLMM17}. Further, the cone spanned by positive linear combinations of Noether–Lefschetz divisors is rational polyhedral \cite{BM19}, and it can be computed explicitly \cite{BBFW25}, yet little is known about the full cone of effective divisors of these moduli spaces. In this note, we refine these results in a quantitative direction. More precisely, we are primarily concerned with two questions:
\begin{enumerate}
\item When the Picard rank of the moduli space is two, such as the case of cubic fourfolds, can we determine the effective cone $\mathrm{Eff}(\mathcal{D}/\Gamma)$?
\item Can we write down a simple and uniform finite presentation for the Picard groups, allowing us to access divisor class computations by means of test curves?
\end{enumerate}

In analogy with the study of effective divisors on the moduli space of principally polarized abelian varieties $\mathcal{A}_g$, we introduce the slope $s(\mathcal{M})$ of the moduli space of cubic fourfolds with simple singularities and prove an explicit bound for it. Determining the exact value of the slope $s(\mathcal{M})$ is equivalent to determining the effective cone. For the second question, our main space of interest is the moduli space $\mathcal{F}_{2d}$ of quasi-polarized K3 surfaces of degree $2d$, where we show that the Picard group is generated by $d+1$-many NL divisors; all those of discriminant at most $4d$. Our answers to both questions rely only on the theory of modular forms associated with orthogonal Shimura varieties and can be applied to a general orthogonal modular variety.

\subsection{Slope of the moduli space of cubic fourfolds} The effective cone of divisors on natural compactifications of $\mathcal{M}_g$ or $\mathcal{A}_g$ governs significant aspects of their birational geometry, yet its description remains a challenge in both cases. For $\overline{\mathcal{A}}_g$ the cone is known when $g\leq 5$, see \cites{SM92, FGSMV14}. For $\overline{\mathcal{M}}_g$, the full cone is only known for $g\leq 3$, see \cite{Rul01}, and when restricted to the $2$-plane $\Pi$ in ${\rm{Pic}}_{\mathbb{R}}\left(\overline{\mathcal{M}}_g\right)$ spanned by the Hodge class and the full boundary, despite being studied for several decades, the cone of effective divisors in $\Pi$ is only known for $g\leq 11$, see \cites{HM90, CR91, Tan98, FP05, CHS08}.

Let $\mathcal{M}_0$ be the GIT moduli space of smooth cubic fourfolds and $\mathcal{M}$ the partial compactification by allowing at worst simple singularities. As for K3 surfaces, the Torelli theorem for smooth cubic fourfolds \cites{Voi86, Voi08} identifies a moduli point $[X]\in \mathcal{M}_0$ with the monodromy orbit of the period line $H^{3,1}(X)\in \mathcal{D}$ after choosing a marking $H^4(X,\mathbb{Z})_{\rm{prim}}(-1)\cong \Lambda_{\rm{cubic}}$. This defines an open embedding $\mathcal{M}_0\hookrightarrow\mathcal{D}\big/\Gamma$ that extends to $\mathcal{M}$, see \cite{Loo09}*{Theorem 4.1} and \cite{Laz10}*{Theorem 1.1}. The extended {\textit{period map}} 
\[
\mathcal{P}:\mathcal{M}\longrightarrow\mathcal{D}\big/\Gamma
\]
is an open embedding whose image is the complement of an irreducible NL divisor. Following \cite{Has00} we denote this divisor by $\mathcal{C}_2$. We see $\mathcal{D}/\Gamma$ as a partial compactification of $\mathcal{M}$ by adding the divisor $\mathcal{C}_2$ to the boundary. By \cite{Laz10}*{Section 6}, this boundary divisor $\mathcal{C}_2$ gets contracted after a small map resolving the rational map $\mathcal{D}\big/\Gamma\dashrightarrow\overline{\mathcal{M}}$ to the GIT compactification of $\mathcal{M}$. In particular, it is extremal. 

The Picard group ${\rm{Pic}}_{\mathbb{Q}}\left(\mathcal{M}\right)$ is one-dimensional generated by the GIT descend of $\mathcal{O}(1)$ or, alternatively, by the class of the Hodge line bundle $\lambda=c_1\left(\mathbb{E}\right)$ whose fiber over a cubic $\mathcal{P}\left(\left[X\right]\right)$ is the period line $H^{3,1}(X)$. Recall that $\lambda$ is defined on $\mathcal{D}\big/\Gamma$ and the pull-back to $\mathcal{M}$ is isomorphic to $\mathcal{O}(2)$, see \cite{Loo09}*{Theorem 4.1}. Then, the Picard group of the partial compactification ${\rm{Pic}}_{\mathbb{Q}}\left(\mathcal{D}\big/\Gamma\right)$ is two-dimensional generated by the boundary $\mathcal{C}_2$ and $\lambda$. In analogy with $\mathcal{A}_g$ we define the slope of an effective divisor $E$ in $\mathcal{M}$ as
\begin{equation}
\label{intro:eq:slopeE}
s(E)=\begin{cases}\frac{\alpha}{\beta},\;\;\; \hbox{if }\alpha,\beta>0 \\
\infty,\;\;\;\hbox{otherwise,}
\end{cases}
\end{equation}
where $\overline{E}=\alpha \lambda-\beta \mathcal{C}_2$ and $\overline{E}$ is the closure of $E$ in $\mathcal{D}\big/\Gamma$. Similarly, we define the slope of the moduli space of cubic fourfolds $\mathcal{M}$ as
\begin{equation}
\label{intro:eq:slopeM}
s(\mathcal{M}):=\inf\left\{s(E)\left|\;\; E\hbox{ effective divisor in }\mathcal{M}\right.\right\}. 
\end{equation}

A natural geometric divisor is given by the closure in $\mathcal{D}\big/\Gamma$ of the locus of singular cubics $\mathcal{M}\setminus \mathcal{M}_0$. This defines an NL divisor denoted $\mathcal{C}_6$ with slope $s\left(\mathcal{C}_6\right)=16/9$, see Equation \eqref{sec4:eq:CCL}. From \cite{BBFW25} (see Section \ref{sec:cubics}) it follows that any Noether--Lefschetz divisor $D$ satisfies 
\[
\frac{16}{9}\leq s(D).
\]
Using pull-back formulas for special cycles (see \cite{Kud97} and \cite{BFZ25}*{Proposition 2.2}) and the modularity of the generating series of volumes of Heegner divisors, see \cite{Kud03}*{Theorem I} we obtain the following lower bound on the slope of the moduli space of cubic fourfolds.

\begin{theorem}
\label{intro:thm:s(M)}
The slope of the moduli space of cubic fourfolds satisfies the following bound:
\[
\frac{523777}{206215591}\leq s(\mathcal{M})\leq 16/9.
\]
\end{theorem}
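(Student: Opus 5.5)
The upper bound is immediate. The divisor $\mathcal{C}_6$ of singular cubics is effective, and its closure has class $\overline{\mathcal{C}_6}=\alpha\lambda-\beta\mathcal{C}_2$ with $\alpha/\beta=16/9$ by Equation \eqref{sec4:eq:CCL}. Hence $s(\mathcal{M})\le s(\mathcal{C}_6)=16/9$.

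For the lower bound, take an irreducible effective divisor $E$ on $\mathcal{M}$ whose closure is $\overline{E}=\alpha\lambda-\beta\mathcal{C}_2$ with $\alpha,\beta>0$. It suffices to treat irreducible $E$, since slopes of sums are mediants of the slopes of the summands. We may also assume $\overline{E}\neq\mathcal{C}_2$, which is automatic because $E$ lives in $\mathcal{M}$. The strategy is to find a moving family of curves (or a covering family of subvarieties) in $\mathcal{D}/\Gamma$ that meets $\overline{E}$ nonnegatively. Natural candidates are sub-Shimura varieties attached to sublattices $\Lambda'\subset\Lambda_{\rm cubic}$ of signature $(2,1)$, i.e.\ modular or Shimura curves.

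The key input is the pull-back formula for special cycles \cite{Kud97}, \cite{BFZ25}*{Proposition 2.2}. Restricted to the sub-Shimura variety $\mathcal{D}_{\Lambda'}/\Gamma'$, the Hodge class $\lambda$ restricts to the Hodge class, and $\mathcal{C}_2$ restricts to a combination of Heegner divisors on $\mathcal{D}_{\Lambda'}/\Gamma'$. Which Heegner divisors appear is governed by vectors of $\Lambda'^\perp$ and their norms, i.e.\ by theta coefficients of the orthogonal complement. When $\Lambda'$ has signature $(2,1)$, the degrees of these Heegner divisors, and the degree of $\lambda$, are given by Kudla's modularity of volumes of Heegner divisors \cite{Kud03}*{Theorem I}: they are the Fourier coefficients of an explicit Eisenstein series of weight $3/2$ (Zagier-type class numbers), while $\deg\lambda$ is a volume. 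Choosing $\Lambda'$ (or a family of them) so that the corresponding curves cover $\mathcal{D}/\Gamma$, a general such curve $C$ is not contained in $\overline{E}$. This gives
\[
0\le \overline{E}\cdot C=\alpha\,\deg\lambda|_C-\beta\,\deg\mathcal{C}_2|_C,
\]
and hence $s(E)=\alpha/\beta\ge \deg\mathcal{C}_2|_C/\deg\lambda|_C$. Irrationality of a covering family can be avoided by using the Hecke translates of a fixed curve, which are Zariski dense.

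The resulting ratio should then be computed explicitly, and the claimed constant $523777/206215591$ should come from a specific choice of $\Lambda'$. I expect the main obstacle to be that computation. The pull-back of $\mathcal{C}_2$ involves all vectors in the dual lattice of $\Lambda'\oplus\Lambda'^\perp$ projecting to norm $-$(discriminant $2$ class) vectors. One has to enumerate the finitely many contributing pairs (a vector in $\Lambda'^\vee$ of some norm plus a vector in $(\Lambda'^\perp)^\vee$), and then evaluate Eisenstein coefficients of weight $3/2$ for the relevant discriminant form. I would first try a simple $\Lambda'$, such as $U\oplus\langle -2k\rangle$ embedded in $\Lambda_{\rm cubic}\cong A_2\oplus E_8^2\oplus U^2$ up to sign conventions, where the Eisenstein series is classical. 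I would then use a computer-assisted calculation of the coefficients via Bruinier–Kuss formulas to obtain the explicit rational bound.
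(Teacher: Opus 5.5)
\textbf{There is a genuine gap in the lower bound.} Your upper bound is the paper's: $\mathcal{C}_6=96\lambda-54\mathcal{C}_2$ gives $s(\mathcal{C}_6)=16/9$. The gap is at the covering step.

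Shimura curves attached to signature-$(2,1)$ sublattices $\Lambda'\subset\Lambda_{\rm cubic}$ are rigid special cycles. There are only countably many, so no algebraic family of them passes through a general point of $\mathcal{D}/\Gamma$. The phrase ``a general such curve is not contained in $\overline{E}$'' therefore produces no fixed curve class that is nonnegative on all effective divisors.

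Hecke translates do not repair this. Zariski density only gives, for each irreducible $E$, some component $C_E$ of some translate with $C_E\not\subset\overline{E}$. That yields only the $E$-dependent bound $s(E)\geq \mathcal{C}_2\cdot C_E/\lambda\cdot C_E$. Nothing makes this ratio constant along the Hecke orbit, or even bounded below by a positive number. A component could lie inside $\mathcal{C}_2$, giving a negative ratio, or be disjoint from it, giving $0$. So no bound uniform in $E$ follows.

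Two further problems:
\begin{enumerate}
\item For isotropic $\Lambda'$ such as $U\oplus\langle -2k\rangle$ the curves are non-compact. You would need to say on which $\mathbb{Q}$-factorial compactification you intersect and why boundary terms vanish.
\item A weight-$3/2$ computation on a curve is not where the stated constant comes from. We have $206215591=174611\cdot 1181$ with $B_{20}=-174611/330$. This is the fingerprint of $\zeta(20)$, i.e.\ of a weight-$21/2$ Eisenstein series and the volume of a $19$-dimensional orthogonal modular variety.
\end{enumerate}

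\textbf{The missing idea.} You need genuinely moving complete-intersection curves, placed inside an NL divisor. The paper takes $\gamma=\overline{\mathcal{C}}_6\cdot(\varepsilon^*\lambda)^{18}$ on a $\mathbb{Q}$-factorial compactification over Baily--Borel. Since $\gamma$ moves in a family covering $\mathcal{C}_6$ and meets the boundary divisors trivially, $\overline{D}\cdot\gamma\geq 0$ for every irreducible $D\neq\mathcal{C}_6$. Divisors of NL type, in particular $\mathcal{C}_6$, have slope at least $16/9$ by \cite{BBFW25}. This is Lemma~\ref{sec4:lemma:gamma}.

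Next, pull back along the finite map $\phi\colon\mathcal{F}_6\to\mathcal{D}/\Gamma$ induced by $\ell\mapsto u-v$, whose image is $\mathcal{C}_6$. The pull-back formula gives $\phi^*\mathcal{C}_2=H_{1/3,2\ell_*}$. By the projection formula the ratio becomes $\deg H_{1/3,2\ell_*}/{\rm vol}(\mathcal{F}_6)$. By \cite{Kud03}*{Theorem I} this is minus the $q^{1/3}\mathfrak{e}_{2\ell_*}$-coefficient of $E_{21/2,\Lambda_6}$, namely $523777/206215591$.

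The most direct valid version of your strategy uses the moving curves $(\varepsilon^*\lambda)^{19}$ on all of $\mathcal{D}/\Gamma$. That gives only $s(\mathcal{M})\geq \deg\mathcal{C}_2/{\rm vol}(\mathcal{D}/\Gamma)=-c_{1/3,\mu}(E_{11,\Lambda_{\rm cubic}})$. Combining $\mathcal{C}_6=96\lambda-54\mathcal{C}_2$ with $c_{1,0}(E)=(3^{10}+1)\,c_{1/3,\mu}(E)$, this equals $3/1847$, which is weaker than the stated bound. Restricting to curves that cover $\mathcal{C}_6$ is what produces the constant in the theorem.
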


\subsection{Finite presentation of ${\rm{Pic}}_{\mathbb{Q}}\left(\mathcal{F}_{2d}\right)$}

Noether--Lefschetz divisors on moduli spaces $\mathcal{F}_{2d}$ of quasi-polarized K3 surfaces of degree $2d$ are defined by prescribing a Picard lattice on the K3 surface. Let $(L,h)$ be a fixed rank two lattice with a distinguished primitive vector $h\in L$ of square $\langle h,h\rangle=2d$. The Noether--Lefschetz divisor ${\rm{NL}}_{(L,h)}$ is defined as the locus in $\mathcal{F}_{2d}$ of K3 surfaces $(S,H)$ that admit a primitive embedding $j:L\hookrightarrow {\rm{Pic}}(S)$ with $h\mapsto H$. This defines an irreducible and reduced divisor. If we write the intersection matrix with respect to a basis as
\[
L=\left(\begin{array}{cc}2d&a\\a&2b-2\end{array}\right),\;\;h=\left(\begin{array}{c}1\\0\end{array}\right),
\]
then ${\rm{NL}}_{(L,h)}$ only depends on the discriminant $-\left|L\right|$ and the class of $a$ modulo $2d$ up to sign, see Section \ref{section2} for details. We will write ${\rm{NL}}_{(L,h)}=P_{\Delta,\delta}$, with $4d\cdot\Delta=-\left|L\right|$ and $\delta\in\mathbb{Z}\big/2d\mathbb{Z}$ the class of $a\mod 2d$ with the convention that $P_{\Delta,\delta}=P_{\Delta, -\delta}$ and $P_{\Delta,\delta}=2{\rm{NL}}_{(L,h)}$ when $\delta=-\delta$. Note that the Hodge Index Theorem implies that $-\left|L\right|>0$. Running over all such possible $(L,h)$'s defines an infinite collection of divisors whose union is analytically dense. Relaxing the primitivity condition of $j:L\hookrightarrow{\rm{Pic}}\left(S\right)$ leads to an analogous notion of NL-divisor often denoted $D_{a,b}$. This is defined as the locus of K3 surfaces $(S,H)\in \mathcal{F}_{2d}$ for which there exists a class $\beta\in {\rm{Pic}}\left(S\right)$ not proportional to $H$ with $\beta^2=2b-2$ and $\beta\cdot H=a$. Again, the class $D_{a,b}$ depends only on the discriminant $a^2-4d(b-1)$ and the class of $a$ modulo $2d$. We write $D_{a,b}=H_{m,\mu}$ with $m=\frac{a^2}{4d}-(b-1)>0$ and $\mu=a\mod 2d$. The indexing set $(m,\mu)$ is much better suited for writing cleaner formulas, as they translate to Fourier coefficients of vector-valued modular forms. 

The Picard group ${\rm{Pic}}_{\mathbb{Q}}\left(\mathcal{F}_{2d}\right)$ is spanned by both (see Equation \eqref{sec2:eq:H=P}) the set of all $D_{a,b}$'s and the set of all ${\rm{NL}}_{(L,h)}$'s. See \cite{MP13}*{Conjecture 3} and \cite{BLMM17}*{Theorem 1.1}. This gives a presentation of ${\rm{Pic}}_{\mathbb{Q}}\left(\mathcal{F}_{2d}\right)$ in terms of infinitely many generators with infinitely many linear relations given by the vanishing of Borcherds products of weight zero, see \cite{Bor 98}*{Theorem 13.3} and \cite{Bru02}*{Theorem 5.12}. A motivation for this note is to exhibit a uniform finite presentation for ${\rm{Pic}}_{\mathbb{Q}}\left(\mathcal{F}_{2d}\right)$, allowing access to divisor computations for high $d$ by means of test curves. 

\begin{theorem}
\label{intro:thm:main1}
The Picard group of the moduli space of quasi-polarized K3 surfaces ${\rm{Pic}}_{\mathbb{Q}}\left(\mathcal{F}_{2d}\right)$ is generated as $\mathbb{Q}$-vector space by the finite set of irreducible divisor classes
\begin{equation}
\label{intro:eq:thm1}
\left\{{\rm{NL}}_{(L,h)}\left|\;{\rm{rk}}(L)=2\hbox{ and }-\left|L\right|\leq 4d\right.\right\}.
\end{equation}
\end{theorem}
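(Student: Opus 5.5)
We would argue through the modularity theory of Borcherds and Bruinier. By \cite{BLMM17}*{Theorem 1.1} and Equation \eqref{sec2:eq:H=P}, ${\rm{Pic}}_{\mathbb{Q}}(\mathcal{F}_{2d})$ is spanned by all Heegner divisors $H_{m,\mu}$ together with the Hodge class $\lambda$. The $H_{m,\mu}$ are indexed by $m\in \frac{\mu^2}{4d}+\mathbb{Z}_{>0}$ (or $\mathbb{Z}_{\geq 0}$), and $H_{m,\mu}=H_{m,-\mu}$. Borcherds' theorem \cite{Bor 98}*{Theorem 13.3}, in the form of \cite{Bru02}*{Theorem 5.12}, gives the complete set of relations. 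For every weakly holomorphic vector-valued modular form $f=\sum c(m,\mu)q^{-m}e_\mu+O(1)$ of weight $1-\tfrac{19}{2}=-\tfrac{17}{2}$ for the dual Weil representation of the discriminant form of $\Lambda_{2d}=U^{2}\oplus E_8(-1)^2\oplus\langle -2d\rangle$, with integral principal part, there is a relation
\[
\sum_{m>0,\mu} c(-m,\mu)H_{m,\mu}=c(0,0)\lambda .
\]
Equivalently, by Serre duality (Borcherds' obstruction principle), the generating series $\lambda e_0 q^0+\sum H_{m,\mu}q^m e_\mu$ is a cusp-corrected holomorphic modular form of weight $\tfrac{21}{2}$ with values in ${\rm{Pic}}_{\mathbb{Q}}$. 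Consequently, the map $(m,\mu)\mapsto H_{m,\mu}$ factors through the finite-dimensional space $M_{21/2,\rho}^{*}$ dual to $M_{21/2,\rho}$, and $H_{m,\mu}$ corresponds to the coefficient functional $a_{m,\mu}$.

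The key reduction is therefore the following statement about modular forms. If a holomorphic form $g\in M_{21/2,\rho}$ (symmetric in $\mu\mapsto -\mu$) has vanishing Fourier coefficients $a_{m,\mu}(g)$ for all $0\le m\le 1$, i.e.\ for $4dm\le 4d$, then $g=0$. Granting this, the functionals $a_{m,\mu}$ with $m\le 1$ span the dual space, so every $H_{m,\mu}$ with $m>1$ and also $\lambda$ (coming from $a_{0,0}$, which is itself in the range $m\leq 1$) lie in the span of $\{H_{m,\mu}: m\le 1\}\cup\{\lambda\}$. To eliminate $\lambda$, we use the relation from the Eisenstein part. Alternatively, we show that the functional $a_{0,0}$ is itself a combination of $a_{m,\mu}$ with $0<m\le 1$, again using the vanishing statement on the cusp forms together with the nonvanishing of some Eisenstein coefficient at $m\le1$. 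This is where the two explicit expressions of $\lambda$ advertised in the abstract should come from.

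The vanishing claim is a Sturm-type bound. Taking an arbitrary component of $g$, or forming a scalar-valued form such as the product over $\gamma\in \mathrm{SL}_2(\mathbb{Z})/\Gamma(4d)$ or the sum of symmetric functions of components, the standard Sturm bound is far too weak (it grows with the index). Instead, we use the structure specific to the lattice: via the Eichler--Zagier or Skoruppa isomorphism, forms in $M_{21/2,\rho}$ for $\langle -2d\rangle$-type discriminant forms correspond to Jacobi forms of weight $11$ and index $d$ (or rather, the dual index with $E_8^2$ contributing trivially). A Jacobi form $\phi(\tau,z)=\sum c(n,r)q^n\zeta^r$ with $c(n,r)$ depending only on $4dn-r^2$ and $r \bmod 2d$ is determined by its Taylor coefficients in $z$, which are ordinary modular forms of weights $11+2\nu$. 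The theta decomposition and the known fact that $J_{k,d}$ embeds into $\bigoplus_{\nu\le d} M_{k+2\nu}$ give vanishing from the coefficients with $n\le$ roughly $(k+2d)/12$. Translating, we need $c(n,r)=0$ for $4dn-r^2\le 4d$, which corresponds to $n\le 1$ with $|r|\le d$. I expect the main obstacle to be matching this bound exactly. To do so, we would show inductively in $n$ that vanishing of $c(0,r)$ and $c(1,r)$ for all $r$ forces the Taylor coefficients (modified as in Eichler--Zagier, which are cusp forms of weight $11+2\nu$) to vanish to order greater than $(11+2\nu)/12$, using the heat operator structure. This step likely needs a careful argument, perhaps via the Petersson product or the Jacobi form $\phi\cdot\phi_{-2,1}^{-d}$ trick, to land precisely at discriminant $4d$.

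Finally, we convert from $H_{m,\mu}$ to ${\rm{NL}}_{(L,h)}$ using the triangular relation \eqref{sec2:eq:H=P}. Each $H_{m,\mu}$ is a sum of $P_{m/k^2,\ k\mu}$-type terms of smaller or equal discriminant, and this relation is invertible by Möbius inversion within discriminant $\le 4d$. We also need the irreducibility statement so that the $P$'s are $({\rm{NL}}_{(L,h)})$ up to the factor $2$.
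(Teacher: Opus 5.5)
Your outline has the same skeleton as the paper's proof: modularity of the Heegner series, a coefficient bound on ${\rm M}^\circ_{21/2,\Lambda}$, removal of $\lambda$ by a relation, and M\"obius inversion via \eqref{sec2:eq:H=P}. However, the two non-formal steps are not supplied, and the routes you sketch for them do not work.

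\emph{The vanishing lemma.} You dismiss the Sturm bound as growing with the index, but for forms with respect to a Weil representation it does not. Since $g|_k\gamma=\rho^*_\Lambda(\gamma)g$ is a linear recombination of the components of $g$, if all components vanish to order $>k/12$ at $\infty$, then they vanish to order $>k/12$ at every cusp of a congruence subgroup on which $\rho^*_\Lambda$ is trivial. The valence formula then forces $g=0$. This is \cite{BW15}*{Proposition 2.4}. The paper uses it in Corollary~\ref{sec2:prop:generation} to get generation by $\lambda$ and the $H_{m,\mu}$ with $m\le 21/24$, uniformly in $d$.

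Your substitute via Taylor coefficients only controls $c(n,r)$ with $n\lesssim(11+2d)/12$, i.e.\ discriminants of size roughly $d^2$. The step down to discriminant $4d$ is left as a hope. It also rests on a misidentification. By your own indexing, the Heegner series has exponents in $\mathbb{Z}+\mu^2/4d$, whereas theta components of holomorphic Jacobi forms of index $d$ have exponents in $\mathbb{Z}-\mu^2/4d$. For example, $J_{11,1}=0$ although $\rho(\mathcal{F}_2)=2$. You are on the skew-holomorphic side, where the Taylor-expansion embedding is not available.

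\emph{Eliminating $\lambda$.} Vanishing on cusp forms plus some nonzero Eisenstein coefficient with $0<m\le 1$ does not put $c_{0,0}$ in the span of $\{c_{m,\mu}:0<m\le 1\}$. A cusp form could cancel the Eisenstein coefficients in that range. What must be excluded is a form $h\in{\rm M}^\circ_{21/2,\Lambda}$ with $h=\mathfrak{e}_0+O(q^{>1})$.

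The paper does this with the residue pairing of Proposition~\ref{sec2:prop:rels} against $f=\Delta^{-1}E_{7/2}$ or $f=\Delta^{-1}\Theta_{\widetilde\Lambda}$, where $\widetilde\Lambda$ is the rank-$7$ complement of a norm-$2d$ vector in $E_8$. The $q^0\mathfrak{e}_0$-coefficient of $f$ is $C=24+c_{1,0}(E)$, respectively $24+\theta_{1,0}(\widetilde\Lambda)\ge 24$. This yields $C\lambda=H_{1,0}+\sum_\delta a_\delta H_{m_\delta,\delta}$ with $C\neq 0$ (Corollary~\ref{sec2:thm:lambda}).

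Equivalently, pairing such an $h$ with $\Theta_{\widetilde\Lambda}$ would give a level-one form of weight $14$ equal to $1+\theta_{1,0}q+O(q^2)$. This is impossible, since ${\rm M}_{14}=\mathbb{C}E_{14}$ and $E_{14}=1-24q+\cdots$ while $\theta_{1,0}\ge 0$. With these two inputs in place, the rest of your argument goes through, including the M\"obius inversion, which only lowers $m$.
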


The set \eqref{intro:eq:thm1} generates the same $\mathbb{Q}$-vector space as $\left\{H_{m,\mu}\left|\;m\leq 1\right.\right\}$, see Equation \eqref{sec2:eq:P}, and can be rewritten (up to some factors of two when $2\delta=0$) as 
\[
\left\{\left.P_{m_\delta,\delta}\right|\;\delta=0,\ldots,d \mod 2d\right\},
\]
where $m_\delta$ is the fractional part of $\delta^2/4d$ when $\delta\neq 0$ and $m_\delta=1$ when $\delta=0$. The cardinality of \eqref{intro:eq:thm1} is $d+1$ whereas the dimension of ${\rm{Pic}}_{\mathbb{Q}}\left(\mathcal{F}_{2d}\right)$ is of the form
\begin{equation}
\label{intro:eq:Picrk}
\rho\left(\mathcal{F}_{2d}\right)=\frac{19}{24}\cdot d + O\left(\sqrt{d}\log(d)\right),
\end{equation}
see \cites{Bru02b,BLMM17}. It is worth noting that in \cite{Zha09}*{Section 3} it is shown that ${\rm{Pic}}_{\mathbb{Q}}\left(\mathcal{F}_{2d}\right)$ is generated by the Hodge class and $22d-1$ non-trivial Noether–Lefschetz divisors of small discriminant. The generating set \eqref{intro:eq:thm1} is purely geometric, no Hodge class appears, in contrast to Zhang’s generating set.

The Hodge class $\lambda=c_1\left(\mathbb{E}\right)$ is defined as the first Chern class of the Hodge line bundle $\mathbb{E}$ whose fiber over $(S,H)\in\mathcal{F}_{2d}$ is the period line $H^0\left(S,\omega_S\right)$. It was first proved in \cite{BKPSB98} that $\lambda$ is supported on NL divisors, yet no explicit formula was given. In \cite{Mau14}*{Theorem 3.1}, the result was strengthened by showing that $\lambda$ is supported on any given set of infinitely many NL divisors. The Hodge class being of Noether--Lefschetz type was reproved by different methods again in \cite{PY20}*{Section 7} and \cite{FR20}*{Theorem 1.4}, yet no explicit formulas were provided. We exhibit two types of formulas for the Hodge class in terms of the first $d+1$ NL-divisors. The first one is an explicit formula in terms of Fourier coefficients of a vector-valued Eisenstein series whose values were computed in \cite{BK01}. The second type is in terms of the representation numbers of rank $7$ definite lattices, that is, the Fourier coefficients of theta series. 

\begin{theorem}
\label{intro:thm:thm2}
The following equality holds in ${\rm{Pic}}_{\mathbb{Q}}\left(\mathcal{F}_{2d}\right)$
\begin{equation}
\label{intro:eq:lambda}
C\cdot\lambda=H_{1,0}+\sum_{\delta=1}^d a_\delta H_{m_\delta,\delta}
\end{equation}
with $C=24+c_{1,0}(E)$, and $a_{\delta}=c_{1-m_\delta,\delta}(E)$, where $E$ is the weight $7/2$ vector-valued Eisenstein series with respect to the Weil representation attached to $A_{\Lambda}=\Lambda^\vee/\Lambda$, where $\Lambda=U^{\oplus 3}\oplus\mathbb{Z}\ell^*$ with $\langle\ell^*,\ell^*\rangle=2d$ and $\delta$ is identified with $\delta\cdot\ell^*/2d+\Lambda$ in $\Lambda^\vee\big/\Lambda$. 
\end{theorem}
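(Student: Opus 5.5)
Our plan is to deduce the formula from Borcherds' modularity theorem (in the form of Bruinier's converse theorem / Serre duality argument): the generating series
\[
\Phi(\tau)=-\lambda\,\mathfrak{e}_0+\sum_{\mu}\sum_{m>0} H_{m,\mu}\, q^m\mathfrak{e}_\mu
\]
is a vector-valued modular form of weight $21/2=1+19/2$ for the dual Weil representation attached to $A_{\Lambda_{K3}}$ (equivalently, after the standard identification via $\Lambda=U^{\oplus 3}\oplus\mathbb{Z}\ell^*$ of signature $(7,0)$ up to sign conventions, a form for $\rho_\Lambda$), with coefficients in ${\rm{Pic}}_{\mathbb{Q}}(\mathcal{F}_{2d})$. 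Equivalently, for every weakly holomorphic form $f=\sum c_{n,\mu}(f)q^n\mathfrak{e}_\mu$ of weight $2-21/2=-17/2$ for the Weil representation with integral principal part, Borcherds' theorem gives the relation $\sum_{m>0,\mu} c_{-m,\mu}(f)H_{m,\mu}=c_{0,0}(f)\lambda$ (up to the normalization of $\lambda$ via the weight of the Borcherds product). So the task reduces to constructing a single form $f$ with principal part exactly $q^{-1}\mathfrak{e}_0+\sum_\delta a_\delta q^{-m_\delta}\mathfrak{e}_\delta$ and computing $c_{0,0}(f)$.

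The key step is the pairing obstruction: by Borcherds' obstruction principle, a principal part $P=\sum c_{-m,\mu}q^{-m}\mathfrak{e}_\mu$ extends to a weakly holomorphic form of weight $-17/2$ with prescribed constant term iff, after adding the constant term, the residue pairing $\{P+c_0\mathfrak{e}_0,g\}$ vanishes for all holomorphic $g$ of the dual weight $21/2$... — more conveniently, we work in the dual picture of weight $7/2$: the space $M_{7/2}(\rho_\Lambda)$ (dual to weight $-3/2$ for the dual representation; these are related to the $21/2$ picture by multiplying by $\Delta^{-1}$-type shifts, and we will set things up so that the relevant obstruction space is $M_{7/2}$). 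We claim $M_{7/2}$ is spanned by the Eisenstein series $E$ together with cusp forms, and the choice of principal part $q^{-1}\mathfrak{e}_0+\sum_\delta a_\delta q^{-m_\delta}\mathfrak e_\delta$ is precisely the one whose pairing with $E$ is accounted for by the constant term. Concretely, I would take $f=E\cdot\eta^{-24}$-style product: $E(\tau)/\Delta(\tau)$ has weight $7/2-12=-17/2$ for $\rho_\Lambda$, its principal part is $q^{-1}\sum_\mu c_{0,\mu}(E)\mathfrak{e}_\mu q^{0}+\sum_{\mu} c_{n,\mu}(E)q^{n-1}\mathfrak{e}_\mu$ with $n<1$; since $c_{0,\mu}(E)=\delta_{\mu,0}$, the exponents $n-1<0$ with $n\in m_\mu+\mathbb{Z}_{\geq 0}$, $n<1$ give exactly $q^{-1}\mathfrak{e}_0$ and $c_{m_\delta... }$ terms. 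Here one must check that for $\delta\neq 0$ the only norm $n\in(0,1)$ in the coset is $1-m_\delta$ is wrong sign; rather, with $E$ indexed by norms congruent to $-\delta^2/4d$, the unique exponent in $(0,1)$ is $1-m_\delta$, yielding $c_{1-m_\delta,\delta}(E)q^{-m_\delta}\mathfrak{e}_\delta$. The constant term of $E/\Delta$ in $\mathfrak e_0$ is $c_{1,0}(E)+24\,c_{0,0}(E)=c_{1,0}(E)+24$ since $1/\Delta=q^{-1}+24+\cdots$. This gives $C=24+c_{1,0}(E)$ and $a_\delta=c_{1-m_\delta,\delta}(E)$, matching the statement, after pairing $\pm\delta$ terms into $H_{m_\delta,\delta}$ with $\delta=1,\dots,d$.

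The main obstacle I expect is bookkeeping of conventions: matching the Weil representation of $\Lambda=U^{\oplus3}\oplus\mathbb{Z}\ell^*$ (positive definite part of rank $1$, signature so that weight $7/2$ is the right Eisenstein weight) with the dual Weil representation of the K3 lattice $U^{\oplus2}\oplus E_8(-1)^{\oplus2}\oplus\langle -2d\rangle$ used in Borcherds' theorem, confirming that $\Delta^{-1}E$ indeed lands in the weight for which Borcherds products exist, and fixing the sign/normalization of $\lambda$ (weight of Borcherds product $=c_{0,0}/2$ versus $\mathcal{O}(-1)$) and the factor $2$ in $H_{m,\mu}=H_{m,-\mu}$ double-counting so that the coefficient of $H_{1,0}$ is exactly $1$. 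Rationality of the coefficients of $E$ (Bruinier–Kuss) ensures a suitable multiple has integral principal part, so Borcherds' theorem applies after clearing denominators.
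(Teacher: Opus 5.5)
Your proposal is correct and is essentially the paper's argument. The paper also takes $f=\Delta^{-1}E_{7/2,\widetilde{\Lambda}}$ with $\widetilde{\Lambda}=U^{\oplus 3}\oplus\mathbb{Z}\ell^*$, whose signature is $(4,3)$ (not $(7,0)$) and whose discriminant form is $(A_{\Lambda_{2d}},-q_{\Lambda_{2d}})$; it reads off the principal part and the constant term $24+c_{1,0}(E)$ exactly as you do, and it differs only in obtaining the relation from the residue pairing of $f$ against the modular Kudla--Millson series $\Theta_{{\rm{KM}},\Lambda_{2d}}$ rather than from the divisor of a Borcherds product, which avoids your clearing-denominators step.

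One step should be carried out rather than asserted. The pairing runs over all $\mu\in A_{\Lambda_{2d}}$, and $H_{m,\delta}=H_{m,-\delta}$ and $c_{m,\delta}(E)=c_{m,-\delta}(E)$. So folding $\pm\delta$ together gives coefficient $2a_\delta$ for $1\leq\delta\leq d-1$ and $a_d$ for $\delta=d$. The displayed formula is therefore correct only if $H_{m_\delta,\delta}$ is read as $H_{m_\delta,\delta}+H_{m_\delta,-\delta}$ when $\delta\neq-\delta$; the paper's proof does not address this normalization either.
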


Explicit formulas for $c_{1-m_\delta,\delta}(E)$ can be found in \cite{BK01}*{Theorem 7}. They are of arithmetic nature and depend on the prime factors of $(1-m_\delta)\cdot 2d$ as well as invariants of $\Lambda\otimes\mathbb{Z}_p$. In particular, they are all positive rational numbers. The fact that $\lambda$ can be expressed as a positive $\mathbb{Q}$-linear combination of Heegner divisors is already implied by the proof of \cite{BM19}*{Proposition 3.3}.

Let $\Lambda$ be a negative-definite lattice with quadratic form $q$, and $\mu+\Lambda\in A_{\Lambda}$ a fixed coset. We will denote by $\theta_{m,\mu}$ the $(m,\mu)$-Fourier coefficient of the vector-valued theta series $\Theta_\Lambda$ attached to $\Lambda$, that is
\[
\theta_{m,\mu}(\Lambda)=\#\left\{x\in \Lambda^\vee\left|\;x\equiv \mu\mod \Lambda\;\;\hbox{and}\;\;-q_\Lambda(x)=m\right.\right\}.
\]

\begin{theorem}
\label{intro:thm:thm3}
Let $\ell$ be a primitive element in $E_8(-1)$ with $\langle \ell,\ell\rangle=-2d$, and $\Lambda=\ell^\perp$ the orthogonal complement. We fix an isomorphism $(A_{\mathbb{Z\ell}},-q_{\mathbb{Z}\ell})\cong(A_{\Lambda},q_\Lambda)$. Then equality \eqref{intro:eq:lambda} holds in ${\rm{Pic}}_{\mathbb{Q}}\left(\mathcal{F}_{2d}\right)$ with 
\[
C=24+\theta_{1,0}(\Lambda),\;\;\;\hbox{and}\;\;\; a_\delta=\theta_{1-m_\delta,\delta}(\Lambda),
\]
where $\delta$ is identified with $\delta\cdot e$ and the element $e\in A_\Lambda$ is a generator with $q_\Lambda(e)=1-1/2d$.
\end{theorem}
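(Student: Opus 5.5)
Theorem \ref{intro:thm:thm2} yields relation \eqref{intro:eq:lambda} from the weight $7/2$ Eisenstein series $E$ of the Weil representation $\rho_\Lambda$. The plan is to replace $E$ by the vector-valued theta series $\Theta_\Lambda$ of the rank $7$ negative-definite lattice $\Lambda=\ell^\perp\subset E_8(-1)$, and to run the same obstruction argument with $\Theta_\Lambda$ in place of $E$. Recall how the relation arises. By Borcherds' theorem in Bruinier's converse form, a $\mathbb{Q}$-linear combination $\sum c_{m,\mu}H_{m,\mu}+c_{0,0}\lambda$ vanishes in ${\rm{Pic}}_{\mathbb{Q}}(\mathcal{F}_{2d})$ (with the sign conventions fixed in Section \ref{section2}) precisely when it pairs to zero against all principal parts of weakly holomorphic forms of weight $-5/2$ for the dual representation. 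Equivalently, the formal series $-\lambda q^0\mathfrak{e}_0+\sum H_{m,\mu}q^m\mathfrak{e}_\mu$ is a modular form of weight $7/2$ for $\rho_{\Lambda_{K3}}$, where $A_{\Lambda_{K3}}\cong A_{\mathbb{Z}\ell^*}$ with $\langle\ell^*,\ell^*\rangle=2d$. Pairing this generating series with any holomorphic weight $7/2$ form $f$ via the Serre duality/residue pairing gives a relation. Each $H_{m,\mu}$ with $m>1$ must then be eliminated using the principal-part relations. In the proof of Theorem \ref{intro:thm:thm2} this was done by exhibiting a weakly holomorphic form of weight $-5/2$ with principal part supported in $m\le 1$, namely $\Delta^{-1}$ times $E$ contracted appropriately. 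The constant $24$ comes from the $q^{-1}$ coefficient of $1/\Delta$ combined with the constant term of $E$.

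First step: check that $\Theta_\Lambda$ is a holomorphic modular form of weight $7/2$ for the correct representation. Since $\Lambda$ is negative definite of rank $7$, $\Theta_\Lambda$ (using $-q_\Lambda$) is holomorphic of weight $7/2$ for $\rho_\Lambda$ or its dual, up to convention. The fixed isomorphism $(A_{\mathbb{Z}\ell},-q_{\mathbb{Z}\ell})\cong(A_\Lambda,q_\Lambda)$ identifies it with a form for the representation $\rho$ attached to $\Lambda_{K3}=U^{\oplus 3}\oplus\mathbb{Z}\ell^*$. This is the standard gluing: $E_8(-1)$ is unimodular and equals an overlattice of $\mathbb{Z}\ell\oplus\Lambda$, so the discriminant forms are anti-isometric. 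One must then check that the generator $e$ with $q_\Lambda(e)=1-1/2d$ corresponds to $\ell^*/2d$, i.e.\ that $q(\ell^*/2d)=1/(4d)\cdot$ normalization matches modulo $1$. This holds because $1-1/2d\equiv -1/(2d)$ and the sign flips under the anti-isometry.

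Second step: form the weight $-5/2$ weakly holomorphic form $F=\Theta_\Lambda/\Delta$, contracted to the dual representation. Its principal part is
\[
q^{-1}\mathfrak{e}_0\;+\;\sum_{\mu}\sum_{0\le m<1}\theta_{m,\mu}q^{m-1}\mathfrak{e}_\mu .
\]
The only nonzero theta coefficient with $m=0$ is $\theta_{0,0}=1$. Pairing the Heegner generating series against $F$ gives a relation in which $H_{m,\mu}$ is weighted by the $q^{-m}$ coefficient of $F$. Those coefficients are $\theta_{1-m,\mu}(\Lambda)$ for $m\le 1$. They vanish for $m>1$, since $\Theta_\Lambda$ is holomorphic. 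For each $\delta$ the only $m\le1$ with $m\equiv -q(\delta)$ and nonzero coefficient are $m_\delta$ and possibly $m_\delta+$integers. But $m>1$ is excluded, and $m_\delta\in(0,1)$ for $\delta\ne0$, while $m\in\{1\}$ for $\delta=0$, since $H_{0,0}$ is absent and the $\lambda$ term absorbs $m=0$. The $\lambda$ coefficient is the constant term of $F$ paired with $\mathfrak{e}_0$. That constant term is $\theta_{1,0}$ plus $24\,\theta_{0,0}$, coming from $1/\Delta=q^{-1}+24+\dots$. The result is $(24+\theta_{1,0})\lambda=H_{1,0}+\sum_\delta\theta_{1-m_\delta,\delta}H_{m_\delta,\delta}$, using $P=P_{\Delta,-\delta}$ to fold $\pm\delta$.

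The main obstacle is the bookkeeping of conventions: the sign of the quadratic form, which of $\rho$ or $\bar\rho$ is involved, and the folding of $\pm\delta$ with the factor $2$ when $2\delta=0$. The cleanest route is to mimic the proof of Theorem \ref{intro:thm:thm2} verbatim, only substituting $\Theta_\Lambda$ for $E$. The argument there used nothing about $E$ beyond holomorphy, weight, representation, and constant term $\mathfrak{e}_0$, and $\Theta_\Lambda$ shares all four.
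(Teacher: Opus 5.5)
Your route is the paper's: Theorem~\ref{intro:thm:thm3} is Corollary~\ref{sec2:thm:lambda}(2) with $N=1$. That is, it is Proposition~\ref{sec2:prop:rels} applied to $f=\Delta(\tau)^{-1}\Theta_{\Lambda(-1)}(\tau)$, where $\Lambda(-1)$ is the positive definite rank $7$ lattice obtained by rescaling $\ell^\perp\subset E_8(-1)$. Your extraction of coefficients is exactly what this gives: $24\theta_{0,0}+\theta_{1,0}$ in front of $\lambda=-H_{0,0}$, $\theta_{1-m,\mu}$ in front of $H_{m,\mu}$ for $0<m\le 1$, and nothing for $m>1$.

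\textbf{The weights are wrong.} The weights you attach to the objects are wrong throughout, and they are the one thing that makes the residue argument work. The space $\mathcal{F}_{2d}$ is attached to $\Lambda_{2d}$ of signature $(2,19)$. So the Heegner generating series $\Theta_{\mathrm{KM},\Lambda_{2d}}$ has weight $1+\tfrac{19}{2}=\tfrac{21}{2}$ (for $\rho^*_{\Lambda_{2d}}$), not $\tfrac72$. Likewise $\Delta(\tau)^{-1}\Theta_{\Lambda(-1)}$ has weight $\tfrac72-12=-\tfrac{17}{2}$, not $-\tfrac52$; this is also the weight $1-\tfrac{19}{2}$ of Borcherds' input forms.

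The identity $-\tfrac{17}{2}+\tfrac{21}{2}=2$ is what makes $f\cdot\Theta_{\mathrm{KM},\Lambda_{2d}}\,d\tau$ a meromorphic differential on the compactified modular curve with its only pole at $\infty$, hence with zero residue. It is also what forces the auxiliary definite lattice to have rank $2-19+24=7$. With your numbers the product has weight $1$ and the residue theorem gives nothing. Similarly, pairing the generating series with a \emph{holomorphic} weight $\tfrac72$ form produces no relation at all: relations come from weakly holomorphic forms of the complementary weight $-\tfrac{17}{2}$.

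Once the weights are corrected, the argument is the paper's and is correct. Only the easy direction is used (existence of $f$ gives a relation, via modularity of $\Theta_{\mathrm{KM},\Lambda_{2d}}$), so Bruinier's converse theorem plays no role.

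\textbf{Normalization of $e$.} Your check does not work. You get $-(1-\tfrac1{2d})\equiv\tfrac1{2d}$, whereas you need $q_\Lambda(e)\equiv q(\ell^*/2d)=\tfrac1{4d}\pmod 1$. In fact no generator can satisfy $q_\Lambda(e)\equiv 1-\tfrac1{2d}$, since this would force $x^2\equiv -2\pmod{4d}$ with $x$ odd. The right value comes from the gluing itself: take $e$ to be the image of $\ell/2d$, so $q_\Lambda(e)=-q_{\mathbb{Z}\ell}(\ell/2d)\equiv\tfrac1{4d}$. What the argument really needs is $(A_\Lambda,-q_\Lambda)\cong(A_{\Lambda_{2d}},q_{\Lambda_{2d}})$. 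Then $\Theta_{\Lambda(-1)}$ has type $\rho_{\Lambda_{2d}}$, dual to that of $\Theta_{\mathrm{KM},\Lambda_{2d}}$, consistent with Milgram's formula since $7\equiv 2-19\pmod 8$.

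\textbf{Folding $\pm\delta$.} Each class with $\delta\neq-\delta$ occurs twice in $\sum_{\mu\in A_{\Lambda_{2d}}}$. So either the sum in \eqref{intro:eq:lambda} is read over all nonzero $\delta\in A_{\Lambda_{2d}}$, or the coefficients for $1\le\delta\le d-1$ pick up a factor $2$. This is a convention in the statement, not a defect of the method.
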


Theorem \ref{intro:thm:main1} as well as the formulas for the Hodge class generalize to the broader context of orthogonal modular varieties. These include also moduli spaces of lattice polarized K3 surfaces, (special) cubic fourfolds, and (lattice) polarized hyperk\"{a}hler varieties. See Theorem \ref{sec2:thm:general} and Corollary \ref{sec2:thm:lambda}.

\subsection{Comments on the proofs} Theorem \ref{intro:thm:s(M)} follows from the observation that on a suitable compactification, if $\gamma$ is a covering curve for an irreducible NL divisor, then
\[
\frac{\mathcal{C}_2\cdot\gamma}{\lambda\cdot \gamma}\leq s\left(\mathcal{M}\right).
\]
This is the content of Lemma \ref{sec4:lemma:gamma}. The curve and the computation of the intersection numbers are obtained by using the pull-back formulas in \cite{Kud97} as well as the generating series for volumes of special cycles. Theorems \ref{intro:thm:main1}, \ref{intro:thm:thm2}, and \ref{intro:thm:thm3} follow from elementary considerations on modular forms after interpreting algebraic cycles on $\mathcal{F}_{2d}$ as linear functionals on vector-valued modular forms. They are all consequences of the geometric theta correspondence in the context of orthogonal Shimura varieties. Concretely, the formulas for the Hodge class follow from the residue pairing defined in \cite{Bor99}*{Theorem 3.1}, and the bounds on the generating set of the Picard follow from the slope considerations in \cite{BW15}*{Section 2}. 


\subsection*{Acknowledgments} We are most grateful to Jan Bruinier, Gavril Farkas, Laure Flapan, Zhiyuan Li, Julian Lyczak, and Ricardo Zuffetti for valuable discussions related to this work. S.H. is especially grateful to Zhiyuan Li for raising the question about finding an explicit generating set of Noether–Lefschetz divisors.

\section{Preliminaries}
\label{sec1}

\subsection{Noether-Lefschetz divisors}
We fix the K3 lattice $\Lambda_{K3}:=U^{\oplus 3}\oplus E_8(-1)^{\oplus 2}$. Let $L\subset \Lambda_{K3}$ be a primitive sublattice of signature $(1,{\rm{rk}}(L)-1)$ and $h\in L$ a primitive class with $q(h)>0$. An $(L,h)$-quasipolarized K3 surface is a K3 surface $X$ together with a primitive lattice embedding $j:L\hookrightarrow {\rm{Pic}}\left(X\right)$ such that $j(h)$ is a quasi-polarization (big and nef), and for which there exists a marking 
\[
\phi:\Lambda_{K3}\overset{\sim}{\longrightarrow}H^2\left(X,\mathbb{Z}\right)
\]
such that $\phi\mid_L=j$, see \cites{Dol96, AE25}, see also \cite{BBV25}*{Definition 2.1}. When $L=\mathbb{Z}h$ has rank one, we call it an $h$-polarization. There is a moduli space $\mathcal{F}_{(L,h)}$ of $(L,h)$-quasipolarized K3 surfaces and a natural (forgetful) proper morphism
\begin{equation}
\label{eq:sec2:FL}
\mathcal{F}_{(L,h)}\longrightarrow\mathcal{F}_{h}.
\end{equation}
The reduced cohomology class of the image is denoted by
\begin{equation}
\label{eq:sec2:NL_Lh}
{\rm{NL}}_{(L,h)}\in H^{2{\rm{rk}}(L)-2}\left(\mathcal{F}_h,{\mathbb{Q}}\right).
\end{equation}
At the level of periods, one has that if $\mathcal{D}_{(L,h)}=\mathcal{D}_{(L,h)}^+$ is a component of the analytic open
\[\mathcal{D}_{(L,h)}^+\cup\mathcal{D}_{(L,h)}^-=
\left\{x\in \mathbb{P}\left(L^{\perp \Lambda_{K3}}\otimes \mathbb{C}\right)\left|\; q(x)=0\hbox{ and }\langle x, \overline{x}\rangle>0\right.\right\},
\]
then \eqref{eq:sec2:FL} on periods becomes
\[
\Phi_{(L,h)}:\mathcal{D}_{(L,h)}\big/\Gamma_{(L,h)}\longrightarrow\mathcal{D}_h\big/\Gamma_h,
\]
where $\Gamma_{(L,h)}$ is the arithmetic group consisting of isometries of $L^{\perp}$ fixing $\mathcal{D}_{(L,h)}$ and acting trivially on the discriminant $A_{L^\perp}=\left(L^{\perp}\right)^\vee\big/L^{\perp}$. The modular variety $\mathcal{D}_{(L,h)}\big/\Gamma_{(L,h)}$ coarsely represents $\mathcal{F}_{(L,h)}$ and ${\rm{NL}}_{(L,h)}$ coincides with the reduced class of the image of $\Phi_{(L,h)}$. Further, the spaces $\mathcal{F}_{(L,h)}$ and $\mathcal{F}_{(L',h')}$ are the same if there exists $g\in{\rm{O}}\left(\Lambda_{K3}\right)$ such that $g(L)=L'$ and $g(h)=h'$. In particular, $\mathcal{F}_{h}$ depends only on the degree $2q(h)=\langle h,h\rangle=2d$, and if we fix $h\in\Lambda_{K3}$, then
\begin{equation}
\label{sec2:eq:K3lattice}
\Lambda_{2d}:=h^\perp\cong U^{\oplus 2}\oplus E_8(-1)^{\oplus 2}\oplus\mathbb{Z}\ell\;\;\hbox{where}\;\;q(\ell)=-d, 
\end{equation}
and $\mathcal{F}_{(L,h)}$ depends only on the $\Gamma_h$-orbit of $h^{\perp L}\subset\Lambda_{2d}$. For an arbitrary even lattice $\Lambda$ of signature $(2,n)$ we denote by $\mathcal{D}_{\Lambda}$ the corresponding connected type IV symmetric domain, by $\Gamma_{\Lambda}$ the arithmetic group of isometries of $\Lambda$ fixing $\mathcal{D}_{\Lambda}$ and acting trivially on $A_{\Lambda}$, and by $\mathcal{F}_{\Lambda}$ the corresponding quotient.

We briefly review the relation in codimension one between NL and {\textit{Heegner divisors}}. When ${\rm{rk}}(L)=2$ the $\Gamma_h$-orbit of $\mathbb{Z}v=h^\perp\cap L\subset \Lambda_{2d}$ depends only on the square $\left<v,v\right>$ and the class in $A_{\Lambda_{2d}}$ (up to sign) of the primitive multiple $v_*=\frac{v}{{\rm{div}}(v)}$ of $v$ in $\Lambda_{2d}^\vee\subset \Lambda_{2d}\otimes \mathbb{Q}$. We write
\[
\Delta=-q(v_*)\;\;\hbox{and}\;\;\delta=v_*+\Lambda_{2d}\in A_{\Lambda_{2d}}.
\]
Here ${\rm{div}}(v)$ is the positive generator of the ideal $\langle v,\Lambda_{2d}\rangle\subset \mathbb{Z}$. Note that in this case $A_{\Lambda_{2d}}$ is cyclic of order $2d$ generated by $\ell_*=\ell/2d$. We denote ${\rm{NL}}_{(L,h)}$ by $P_{\Delta,\delta}$ with the convention that $P_{\Delta,\delta}=P_{\Delta,-\delta}$ and $P_{\Delta,\delta}=2{\rm{NL}}_{(L,h)}$ when $\delta=-\delta$ in $A_{\Lambda_{2d}}$. The relation between the determinant $\left|L\right|$ and $\Delta\in \mathbb{Q}_{\geq0}$ is given by $-\left|L\right|=4d\Delta$. Further, if $\delta=a\ell_*\mod \Lambda_{2d}$, then $(L,h)$ can be realized as 
\begin{equation}
\label{sec2:eq:L}
L=\left(\begin{array}{cc}2d&a\\a&2b-2\end{array}\right),\;\;h=\left(\begin{array}{c}1\\0\end{array}\right).
\end{equation}
The divisor $P_{\Delta,\delta}$ can also be described as the $\Gamma_{h}$-quotient of the hyperplane arrangement 
\begin{equation}
\label{sec2:eq:P}
P_{\Delta,\delta}:=\left(\sum_{\substack{x\in\delta+\Lambda_{2d}\hbox{\tiny{ primitive}}\\ q(x)=-\Delta}}x^\perp\cap\mathcal{D}_h\right)\big/\Gamma_h
\end{equation}
called a {\textit{primitive Heegner divisor}}. The sum (instead of a union) stands here for the fact that when $\delta=-\delta$ we count hyperplanes with multiplicity two. The corresponding $\Gamma_h$-quotient of \eqref{sec2:eq:P} without the primitivity assumption is called a {\textit{Heegner divisor}} denoted $H_{m,\mu}$. The definition of both primitive and non-primitive Heegner divisors generalizes to arbitrary $(2,n)$-type lattice $\Lambda$. Since every element is a multiple of a primitive element, by M\"{o}bius inversion one obtains 
\begin{equation}
\label{sec2:eq:H=P}
H_{m,\mu}=\sum_{\substack{s>0\\ \delta\in A_\Lambda\\s\delta=\mu}}P_{\frac{m}{s^2},\delta}\;\;\;\hbox{and}\;\;\;P_{\Delta,\delta}=\sum_{\substack{s>0\\ \alpha\in A_\Lambda\\s\alpha=\delta}}\mu(s)H_{\frac{\Delta}{s^2},\alpha}
\end{equation}
where $P_{\frac{m}{s^2},\delta}=0$ (similarly $H_{\frac{\Delta}{s^2}, \alpha}$=0) if there is no primitive class $x$ in $\delta+\Lambda$ with $q(x)=m/s^2$, see \cite{BM19}*{Lemma 4.2}. When $\Lambda=\Lambda_{2d}$ and $\mu=a\ell_*$, the divisor $H_{m,\mu}$ (also denoted $D_{a,b}$) is supported on the locus of K3 surfaces $(S,H)$ admitting a class $\beta\in{\rm{Pic}}(S)$ (not necessarily primitive!) not proportional to $H$ where the intersection matrix of $\{H,\beta\}$ is given by \eqref{sec2:eq:L} and $m=a^2/4d-(b-1)$, see \cite{MP13}*{Lemma 3}. 

One declares $-H_{0,0}$ to be the Hodge class $\lambda\in{\rm{Pic}}_{\mathbb{Q}}\left(\mathcal{F}_{\Lambda}\right)$. We work with $H_{m,\mu}$ because of the formal properties of the generating series
\begin{equation}
\label{sec2:eq:KM}
\Theta_{{\rm{KM}},\Lambda}(\tau)=\sum_{m,\mu}H_{m,\mu}q^{m}\mathfrak{e}_{\mu}
\in {\rm{Pic}}_{\mathbb{Q}}\left(\mathcal{F}_{\Lambda}\right)[\![q^{1/N}]\!]\otimes \mathbb{C}\left[A_{\Lambda}\right].
\end{equation}
Here $N$ stands for the {\textit{level}} of $\Lambda$, that is, the smallest $N$ such that $N\cdot q_{\Lambda}(\cdot)$ is integral on $\Lambda^\vee$.

\subsection{Vector-valued modular forms} Let $\Lambda$ be an even lattice of signature $(b^+,b^-)$ with $\mathbb{Z}$-valued quadratic form $q_\Lambda(x)=\frac{\langle x,x\rangle}{2}$, and discriminant group $A_\Lambda$. There is a {\textit{Weil representation}} 
\[
\rho_\Lambda:{\rm{Mp}}_2(\mathbb{Z})\longrightarrow{\rm{GL}}\left(\mathbb{C}\left[A_\Lambda\right]\right)
\]
of the metaplectic double cover of ${\rm{SL}}_2(\mathbb{Z})$ on the group algebra of $A_\Lambda$. We write $\{\mathfrak{e}_\mu\left|\;\mu\in A_\Lambda\right.\}$ for the standard basis of $\mathbb{C}\left[A_\Lambda\right]$. See \cite{Bor98}*{Section 4} for a description in terms of the standard generators of ${\rm{Mp}}_2(\mathbb{Z})$. A {\textit{vector-valued modular form of weight}} $k\in\frac{1}{2}\mathbb{Z}$ with respect to the dual Weil representation $\rho_\Lambda^*$ is a holomorphic function $f:\mathbb{H}\longrightarrow\mathbb{C}\left[A_\Lambda\right]$ such that 
\begin{equation}
\label{sec2:eq:f}
f(\gamma\tau)=\phi(\tau)^{2k}\rho_\Lambda^*(\gamma)f(\tau)\;\;\hbox{for all}\;\gamma=(M,\phi)\in{\rm{Mp}}_2(\mathbb{Z})
\end{equation}
and $f$ is holomorphic at $\infty$. If $f$ vanishes at $\infty$ it is called a {\textit{cusp form}}. They form finite-dimensional vector spaces denoted ${\rm{M}}_{k,\Lambda}$ and ${\rm{S}}_{k,\Lambda}$ respectively. Every such function admits a Fourier expansion
\begin{equation}
\label{sec2:eq:fFourier}
f=\sum_{\mu\in A_\Lambda}\sum_{\substack{m\in \mathbb{Z}-q_\Lambda(\mu)\\m\geq 0}}\alpha_{m,\mu}q^m\mathfrak{e}_\mu\;\;\;\hbox{where}\;\; q=e^{2\pi i\tau}.
\end{equation}
When $\alpha_{0,\mu}=0$ for all non-trivial isotropic elements $\mu\in A_\Lambda$, the modular form $f$ is called an {\textit{almost cusp form}}. The vector space of almost cusp forms is denoted ${\rm{M}}_{k,\Lambda}^\circ\subset{\rm{M}}_{k,\Lambda}$. Assuming further that $2k\equiv b^+-b^-\mod 4$ and $k>2$, if we regard $\mathfrak{e}_0:\mathbb{H}\longrightarrow \mathbb{C}\left[A_\Lambda\right]$ as a constant function, then the Eisenstein series 
\begin{equation}
\label{sec2:eq:E}
E_{k,\Lambda}(\tau)=\frac{1}{4}\sum_{\gamma\in \Gamma_\infty\backslash {\rm{Mp}}_2\left(\mathbb{Z}\right)}\phi(\tau)^{-2k}\rho_\Lambda^*(\gamma)^{-1}\mathfrak{e}_0
\end{equation}
is a vector-valued almost cusp form of weight $k$, where $\Gamma_{\infty}$ is the stabilizer of the cusp. In particular ${\rm{M}}_{k,\Lambda}^\circ=\mathbb{C}E_{k,\Lambda}\oplus{\rm{S}}_{k,\Lambda}$. Further, when $b^+=2$ and $2k={\rm{rk}}(\Lambda)$, the Fourier coefficients of $E_{k,\Lambda}$ are rational, see \cite{BK01}. Assume $\Lambda$ has signature $(2,n)$ and $k=1+n/2$. The modularity of the corresponding generating series \eqref{sec2:eq:KM} states that $\Theta_{{\rm{KM}},\Lambda}\in {\rm{Pic}}_{\mathbb{Q}}\left(\mathcal{F}_\Lambda\right) \otimes{\rm{M}}_{k,\Lambda}^\circ\left(\mathbb{Q}\right)$, see \cite{KM90, Bor99}, where ${\rm{M}}_{k,\Lambda}^\circ\left(\mathbb{Q}\right)$ is the $\mathbb{Q}$-vector space consisting of vector-valued almost cusp forms with rational Fourier coefficients. In particular, $\Theta_{{\rm{KM}},\Lambda}$ induces a map
\begin{equation}
\label{sec2:eq:iso}
\Psi_\Lambda:{\rm{M}}_{k,\Lambda}^\circ\left(\mathbb{Q}\right)^\vee\longrightarrow{\rm{Pic}}_{\mathbb{Q}}\left(\mathcal{F}_\Lambda\right)
\end{equation}
that sends the $(m,\mu)$-Fourier coefficient extraction functional $c_{m,\mu}:f\mapsto \alpha_{m,\mu}$ to the Heegner divisor $H_{m,\mu}$. When $\Lambda$ splits off two copies of the hyperbolic plane, the map $\Psi_\Lambda$ is an isomorphism, see \cite{Bru02}*{Theorem 0.4}, \cite{Bru14}*{Theorem 1.2}, and \cite{BLMM17}*{Theorem 1.8}. See also \cite{BZ24}*{Remark 3.13}.

\subsection{Higher codimension NL cycles}

The (reducible) NL-divisor $D_{a,b}$ on $\mathcal{F}_{2d}$ generalizes naturally to higher codimension. For $\underline{a}=(a_1,\ldots,a_g)\in \mathbb{Z}^g$, and $T_0\in {\rm{Sym}}^{g}\left(\mathbb{Z}\right)$ a symmetric $g\times g$-matrix with integral entries, we denote by $D_{T_0, \underline{a}}$ the reduced cycle given by the locus of K3 surfaces $(S,H)\in\mathcal{F}_{2d}$ for which there are classes $\beta_1,\ldots,\beta_g\in{\rm{Pic}}\left(S\right)$ with no linear combination proportional to $H$ such that $H\cdot \beta_i=a_i$ and $(\left<\beta_i,\beta_j\right>) = T_0$. If non-empty, this defines an algebraic cycle of codimension given by the rank of $T_0$. In terms of irreducible NL-cycles one has that
\[
D_{T_0, \underline{a}}=\sum_{(L,h)}{\rm{NL}}_{(L,h)}\;\;\;\hbox{in}\;\;\;{\rm{CH}}^{{\rm{rk}}(T_0)}\left(\mathcal{F}_{2d}\right),
\]
where the sum runs over isomorphism classes of embeddings 
\[
(L,h)\hookrightarrow \left({\rm{Pic}}(S),H\right),\;\;\hbox{with}\;\;L=\left(\begin{array}{cc}2d&\underline{a} \\ \underline{a}^t&T_0\end{array}\right),\;\;h=\left(\begin{array}{c}1\\0\\\vdots\\0\end{array}\right),
\]
up to the action of ${\rm{O}}^{+}\left(\Lambda_{K3}, h\right)$. Note that the Hodge index theorem implies that after projecting all $\beta_i$ to $H^{\perp}$ in ${\rm{Pic}}(S)\otimes \mathbb{Q}$, the resulting intersection matrix of the projected classes $\widetilde{\beta}_i$ must be negative semi-definite.

Heegner divisors generalize further to {\it{special cycles}} which we now briefly explain. Let $\Lambda$ be an even lattice of signature $(2,n)$. We fix a positive semi-definite matrix $T\in {\rm{Sym}}^g\left(\mathbb{Q}\right)$, and class $\underline{\mu}=(\mu_1,\ldots,\mu_g)\in A_{\Lambda}^g$. One defines the special cycle $Z_{T,\underline{\mu}}\in{\rm{CH}}^{{\rm{rk}}(T)}\left(\mathcal{F}_{\Lambda}\right)$ as 
\[
Z_{T,\underline{\mu}}=\left(\sum_{\substack{\underline{x}\in\underline{\mu}+\Lambda^g\\q_\Lambda(\underline{x})=-T}}{\rm{span}}(x_1,\ldots,x_g)^\perp\cap\mathcal{D}_\Lambda\right)\big/\Gamma_\Lambda.
\]
When $\Lambda$ is the lattice \eqref{sec2:eq:K3lattice} corresponding to $\mathcal{F}_{2d}$, the same argument as in \cite{MP13}*{Lemma 3} leads to the following relation between Noether--Lefchetz and special cycles.

\begin{lemma}
    We have ${\rm{Supp}}\left(Z_{T,\underline{\mu}}\right)=D_{T_0, \underline{a}}$, where
    \[
    T = -\frac{1}{2}\left(T_0-\frac{1}{2d}\underline{a}^t\underline{a}\right) \hbox{ and } \underline{\mu}\equiv (a_1\ell_*,\dots,a_g\ell_*)\mod \Lambda_{2d}^g.
    \]
\end{lemma}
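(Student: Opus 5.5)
We argue directly at the level of periods, imitating \cite{MP13}*{Lemma 3}: we identify which hyperplane-intersections ${\rm span}(x_1,\dots,x_g)^\perp\cap\mathcal{D}_h$ contribute to $Z_{T,\underline{\mu}}$, and then translate the condition $x_i\in\mathcal{D}^\perp$ into the existence of algebraic classes $\beta_i$ on the corresponding K3 surface.

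First, the passage from classes $\beta_i$ to vectors $x_i$. Fix $h\in\Lambda_{K3}$ with $\Lambda_{2d}=h^\perp$, and recall that $\Lambda_{K3}$ is unimodular. Hence the orthogonal projection
\[
\pi:\Lambda_{K3}\to\Lambda_{2d}\otimes\mathbb{Q},\qquad \beta\mapsto \beta-\tfrac{\langle\beta,h\rangle}{2d}h,
\]
has image exactly $\Lambda_{2d}^\vee$. Moreover, $\pi(\beta)$ modulo $\Lambda_{2d}$ depends only on $\langle \beta,h\rangle \bmod 2d$, and it equals $a\ell_*$ when $\langle\beta,h\rangle=a$. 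For this last identification we use the standard choice $\ell=h-2dv$ with $h=e+df$ in a copy of $U$ and $\ell_*=\ell/2d$, so that $\pi(f)=f-h/2d=-\ell_*$, up to the sign convention fixed by the identification. Conversely, every $x\in a\ell_*+\Lambda_{2d}$ can be written as $x=\pi(\beta)$ for a unique $\beta=x+\frac{a}{2d}h\in\Lambda_{K3}$ with $\langle\beta,h\rangle=a$. Integrality of $\beta$ is checked by the same computation, using that $\Lambda_{2d}\oplus\mathbb{Z}h\subset\Lambda_{K3}$ has the glue given by $\ell_*\mapsto h/2d$.

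Next, the computation of the Gram matrix. For $\beta_i$ with $\langle\beta_i,h\rangle=a_i$ and $\langle\beta_i,\beta_j\rangle=(T_0)_{ij}$ we get
\[
\langle\pi\beta_i,\pi\beta_j\rangle=(T_0)_{ij}-\frac{a_ia_j}{2d}.
\]
The $(q_\Lambda(\underline x))_{ij}$ convention uses half the Gram matrix, so $q_\Lambda(\underline x)=\tfrac12\big(T_0-\tfrac1{2d}\underline a^t\underline a\big)=-T$. Therefore the map $\underline\beta\mapsto\underline x=\pi(\underline\beta)$ is a bijection between tuples $\underline\beta$ with the prescribed numerics and tuples $\underline x\in\underline\mu+\Lambda_{2d}^g$ with $q(\underline x)=-T$. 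The condition that no linear combination of the $\beta_i$ is proportional to $h$ becomes the condition that no nontrivial combination of the $x_i$ vanishes in a way that forces it. More precisely, ${\rm span}(\underline x)^\perp$ has the expected codimension ${\rm rk}(T)$ exactly when the relevant combinations are not multiples of $h$. Degenerate tuples give empty or improper contributions; we match them on both sides by the same convention as in the definition of $D_{T_0,\underline a}$.

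Finally, the period condition. A period point $[\omega]\in\mathcal{D}_h$ of $(S,H)$ lies in ${\rm span}(\underline x)^\perp$ if and only if $\langle\omega,x_i\rangle=0$ for all $i$. Since $\langle\omega,h\rangle=0$, this is equivalent to $\langle\omega,\beta_i\rangle=0$. By the Lefschetz $(1,1)$ theorem this holds if and only if $\beta_i\in{\rm Pic}(S)$ under the marking. Passing to the $\Gamma_h$-quotient, where markings differ by $\Gamma_h$ acting trivially on $A_{\Lambda_{2d}}$ so that $\underline\mu$ is preserved (up to the global sign $\pm$, which matches $\underline a\mapsto-\underline a$ handled by conventions), we conclude that the support of $Z_{T,\underline\mu}$ is exactly the locus $D_{T_0,\underline a}$.

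The main obstacle is bookkeeping rather than mathematics. We must verify the glue identification $\pi(\beta)\equiv a\ell_*$, including the sign convention for $\ell_*$, and we must check that the nondegeneracy and non-proportionality conditions correspond exactly on both sides. This is especially delicate for positive semi-definite $T$ of non-maximal rank, where we should verify that the codimension is ${\rm rk}(T)$ on both sides.
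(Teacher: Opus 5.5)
Your argument is the one the paper intends; the paper's proof is only a pointer to the argument of Maulik--Pandharipande's Lemma~3. The core is right: $\pi(\Lambda_{K3})=\Lambda_{2d}^\vee$, the glue identification, the Gram computation giving $q(\underline x)=-T$, the bijection $\underline\beta\leftrightarrow\underline x$ for fixed $\underline a$, and the Lefschetz $(1,1)$ step.

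There is one small sign fix. With your own choice $\ell=e-df$ you get $\pi(\beta)\equiv -a\ell_*$. So for $x=a\ell_*+y$ the vector $x+\frac{a}{2d}h=\frac{a}{d}e+y$ is not integral. Start instead from $\underline x\in-\underline\mu+\Lambda_{2d}^g$, or replace $\ell$ by $-\ell$. This costs nothing, since $\underline x\mapsto-\underline x$ gives $Z_{T,\underline\mu}=Z_{T,-\underline\mu}$.

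The paragraph on non-proportionality, however, is not correct as written and does not settle the degenerate case. The precise statements are these.
\begin{itemize}
\item Since $\sum_i c_i\beta_i=\sum_i c_ix_i+\frac{1}{2d}\big(\sum_i c_ia_i\big)h$ with the first summand in $h^\perp$, a combination of the $\beta_i$ lies in $\mathbb{Q}h$ if and only if $\sum_i c_ix_i=0$.
\item If $[\omega]\in\mathrm{span}(\underline x)^\perp\cap\mathcal{D}_h$, then $\mathrm{span}(\underline x)$ is orthogonal to the positive $2$-plane spanned by $\mathrm{Re}\,\omega$ and $\mathrm{Im}\,\omega$, hence is negative definite.
\item Consequently its dimension is the rank of its Gram matrix $-2T$, and the relations $\sum_i c_ix_i=0$ are exactly the vectors $c\in\ker T$.
\end{itemize}

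Two things follow. First, every nonempty contribution has codimension ${\rm rk}(T)$ automatically; contrary to your sentence, this has nothing to do with proportionality to $h$. Second, the condition ``no linear combination proportional to $H$'' is equivalent to $\ker T=0$.

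For $T$ positive definite the condition is therefore automatic, and your bijection proves the lemma. For degenerate $T$ the literal condition can never hold at a point of $\mathcal{D}_h$, while $Z_{T,\underline\mu}$ may be nonempty. For example, $T=\mathrm{diag}(m,0)$ with $\underline\mu=(\mu_1,0)$ forces $x_2=0$ and $\beta_2=\frac{a_2}{2d}h$. There the equality only holds once that condition is dropped, or replaced by something like ${\rm rk}\langle H,\beta_1,\dots,\beta_g\rangle={\rm rk}(T)+1$, which the points above show is automatic. You should state this explicitly rather than appeal to ``the same convention''.
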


Generalizing \eqref{sec2:eq:KM}, one constructs the Kudla--Millson generating series 
\[
\Theta_{{\rm{KM}},\Lambda}^g=\sum_{\underline{\mu}\in A_{\Lambda}^g}\sum_{\substack{T\in{\rm{S}}^g-q_\Lambda(\underline{\mu})\\T\geq0}}\left(Z_{T,\underline{\mu}}\cdot(-\lambda)^{g-{\rm{rk}}(T)}\right)q^T\mathfrak{e}_{\underline{\mu}}\in {\rm{CH}}^g\left(\mathcal{F}_{\Lambda}\right)\llbracket q^T\left|T\in{\rm{Sym}}^g(\mathbb{Q})\right.\rrbracket\otimes \mathbb{C}\left[A_{\Lambda}^g\right],
\]
where $q^T = e^{2\pi i\mathrm{Tr}(T\tau)}$, and $\tau\in \mathbb{H}_g$, the Siegel upper half-space of genus $g$. The main results in \cite{KM90}, \cite{BW15}*{Theorem 6.2} and \cite{BR15Correction} is that $\Theta_{{\rm{KM}},\Lambda}^g$ is the Fourier expansion of a vector-valued Siegel modular form of weight $k={\rm{rk}}(\Lambda)/2$ with respect to the dual of the Weil representation $\rho_{\Lambda,g}:{\rm{Mp}}_{2g}(\mathbb{Z})\longrightarrow {\rm{GL}}\left(\mathbb{C}\left[A_\Lambda^g\right]\right)$, where ${\rm{Mp}}_{2g}(\mathbb{Z})$ is the metaplectic double cover of ${\rm{Sp}}_{2g}\left(\mathbb{Z}\right)$. In particular, there is a $\mathbb{C}$-linear map 
\begin{equation}
\label{sec2:eq:psi_g}
\Psi_{\Lambda}^g:\left({\rm{M}}_{k,\Lambda}^g\right)^{\vee}\longrightarrow {\rm{CH}}^{g}\left(\mathcal{F}_\Lambda\right)\otimes\mathbb{C}
\end{equation}
generalizing \eqref{sec2:eq:iso}, sending the Fourier coefficient extraction functional $c_{T,\underline{\mu}}:{\rm{M}}_{k,\Lambda}^g\longrightarrow \mathbb{C}$ to the cycle class $Z_{T,\underline{\mu}}\cdot (-\lambda)^{g-{\rm{rk}}(T)}\in {\rm{CH}}^{g}\left(\mathcal{F}_\Lambda\right)\otimes\mathbb{C}$.

\section{The Hodge class and finite generation of special cycles}
\label{section2}

Consider the Heisenberg group
$$\mathcal{H}_{g - 1,1}(\mathbb{R}) = \left\{(r, s, t) \in (\mathbb{R}^{g-1})^2 \times \mathbb{R} \bigg \vert t - r^t s \right\}$$
with multiplication
$$(r, s, t)(r', s', t') = (r+r', s+s', t + t' + r^t s' - s^t r').$$ 
Define the Jacobi group $\mathcal{J}_{g - 1, 1}(\mathbb{R}) = \rm{Mp}_{2 g - 2}(\mathbb{R}) \ltimes \mathcal{H}_{g - 1,1}(\mathbb{R})$, where $M \in \rm{Mp}_{2g - 2}(\mathbb{R})$ acts on $(r, s, t) \in \mathcal{H}_{g - 1,1}(\mathbb{R})$ via
$$M (r, s, t) = (ar + bs, cr + ds, t).$$
We have an embedding of $\mathcal{J}_{g - 1, 1}(\mathbb{R})$ into $\rm{Mp}_{2g}(\mathbb{R})$ via
$$((M, \phi), (r, s, t)) \mapsto \left(\begin{pmatrix} a & 0 & b & as - bs \\ r^t & 1 & s^t & t \\ c & 0 & d & cs - dr \\ 0 & 0 & 0 & 1 \end{pmatrix}, \tilde{\phi}\right),$$
where $\tilde{\phi}(\tau) = \phi(\tau_1)$. The Weil representation $\rho_{\Lambda, g}$ induces representations $\rho_{\Lambda, g - 1, \mu_2}$ for $\mu_2 \in A_\Lambda$ satisfying
$$\rho_{\Lambda, g}(M) \mathfrak{e}_{(\mu_1, \mu_2)} = (\rho_{\Lambda, g - 1, \mu_2}(M) \mathfrak{e}_{\mu_1}) \otimes \mathfrak{e}_{\mu_2}$$
for all $M \in \mathcal{J}_{g - 1, 1}(\mathbb{Z})$ and $\mu_1 \in A_\Lambda^{g-1}, \mu_2 \in A_\Lambda$. A vector-valued Jacobi form of weight $k$ and index $m$ with respect to $\rho^*_{\Lambda, g-1, \mu_2}$ is a holomorphic function $f : \mathbb{H}_{g-1} \times \mathbb{C}^{g-1} \to \mathbb{C}[A_\Lambda^{g-1}]$ that satisfies
$$f(\tau_1, z + r \tau_1 + s) = e(-r m r^t \tau_1 - 2 r^t m z - r^t m s) \rho^*_{\Lambda, g-1, \mu_2}(r, s, t) f(\tau_1, z)$$
for all $(r, s, t) \in \mathcal{H}_{g-1,1}(\mathbb{Z})$ and
$$f(M(\tau_1, z)) = \phi(\tau_1)^{2k} e(z m \tau_2^t c (c \tau_1 + d)^{-1}) \rho^*_{\Lambda, g-1, \mu_2}(M) f(\tau_1, z)$$
for all $M \in \rm{Mp}_{2 g-2}(\mathbb{Z})$. Then $f$ has a Fourier expansion
$$f(\tau_1, z) = \sum_{\substack{\mu_1 \in A_\Lambda^{g-1} \\ T_1 \in {\rm{S}}^{g-1} - q_\Lambda(\mu_2) \\ T_{12} \in \mathbb{Z}^{g-1} - (\mu_1, \mu_2)_\Lambda}} c_{T_1, T_{12},\mu_1}(f) \mathfrak{e}_{\mu_1}(T_1 \tau_1 + T_{12} z).$$

Let $f$ be a vector-valued Jacobi form of weight $k$ and index $m$ with respect to $\rho^*_{\Lambda, g-1, \mu_2}$ and let $\ell = \operatorname{ord}(\mu_2)$. Then $(U_\ell f)(\tau_1, z) = f(\tau_1, \ell z)$ is a vector-valued Jacobi form of weight $k$ and index $\ell^2 m$ with respect to $\rho^*_{\Lambda, g-1}$ and the Fourier coefficients are related by $c_{T_1, T_{12}, \mu_1}(f) = c_{T_1, \ell T_{12}, \mu_1}(U_\ell f)$. Let $A_{\ell^2 m}$ be the discriminant group associated to the lattice $\mathbb{Z}$ with quadratic form $q_{\ell^2 m}(x) = - \ell^2 m x^2$ and define for $a \in A_{\ell^2 m}^{g-1}$ the Jacobi theta function
$$\Theta_{\ell^2 m, a}(\tau_1, z) = \sum_{\lambda \in a} e(-q_{\ell^2 m}(\lambda) \tau - (\lambda, z)_{\ell^2 m}).$$
Then $U_\ell f$ has a theta decomposition
$$(U_\ell f)(\tau_1, z) = \sum_{a \in A_{\ell^2 m}^{g-1}} \psi_a(\tau_1) \Theta_{\ell^2 m, a}(\tau_1, z)$$
and $\psi = (\psi_a)_{a \in A_{\ell^2 m}^{g-1}}$ is a vector-valued Siegel modular form of weight $k - 1/2$ with respect to the dual Weil representation of $\Lambda \oplus \langle - \ell^2 m \rangle$, see \cite{Wil19} for $g = 1$ and \cite{Ziegler} for the general case. The Fourier coefficients of $\psi$ are then given by
$$c_{T_1 - T_{12} T_{12}^t / (4m), (\mu_1, T_{12})}(\psi) = c_{T_1, T_{12}, \mu_1}(f).$$

Let now $f \in {\rm{M}}_{k,\Lambda}^g$. Then $f$ has a Fourier-Jacobi expansion
\[
f(\tau)=f\left(\begin{array}{cc}\tau_1&z\\z^t&\tau_2\end{array}\right) = \sum_{\substack{\mu_2 \in A_\Lambda \\ m \in \mathbb{Z} - q_\Lambda(\mu) \\ m \geq 0}} f_{m,\mu_2}(\tau_1, z) \otimes \mathfrak{e}_{\mu_2}(m \tau_2),
\]
where $f_{m, \mu_2}$ is a vector-valued Jacobi form of weight $k$, index $m$ with respect to the Weil representation $\rho^*_{\Lambda, g-1, \mu_2}$, see \cite[Section 3.5]{KieferZuffetti} for $g = 2$. The Fourier coefficients of $f$ and $f_{m, \mu_2}$ are then related via
$$c_{T, \underline{\mu}}(f) = c_{T_1, T_{12}, \mu_1}(f_{m, \mu_2}),$$
where $\underline{\mu} = (\mu_1, \mu_2), \mu_1 \in A_\Lambda^{g-1}, \mu_2 \in A_\Lambda$ and $T = \begin{pmatrix}T_1 & T_{12} / 2 \\ T_{12}^t / 2 & m\end{pmatrix}$. We consider the Siegel modular forms $\psi_{m, \mu_2}$ associated to $f_{m, \mu_2}$. Then we have for the Fourier coefficients
\begin{equation}
    c_{T, \underline{\mu}}(f) = c_{T_1 - T_{12} T_{12}^t / (4m), (\mu_1, T_{12})}(\psi_{m, \mu_2}). \label{eq:ThetaDecompFC}
\end{equation}

We write $\operatorname{ord}(f)$ for the smallest $m$ with $f_{m,\mu_2} \neq 0$. Recall that the slope of a weight $k$ scalar-valued Siegel modular form $f$ for the full ${\rm{Sp}}_{2g}(\mathbb{Z})$ is defined as the ratio $s(f)=k/{\rm{ord}}(f)$, and the slope $s_g$ of  the moduli space of principally polarized abelian varieties $\mathcal{A}_g$ is defined as \[
s_g:=\inf\left\{s(f)\left|\;f\in{\rm{M}}_{\bullet}^g\setminus\{0\}\right.\right\}.
\] 
The smallest possible slope for an elliptic modular form is $12$ and attained by the discriminant form $\Delta(\tau)$. By \cite{BW15}*{Proposition 1.4}, we have $\operatorname{ord}(f) \leq \frac{k}{s_g}$.

Let $C_{1,g}(k) = \frac{k}{s_g}$ and define recursively
$$C_{i+1, g}(k) = \max\left(C_{1, g}(k), C_{i,g-1}\left(k-\frac{1}{2}\right)+ \frac{1}{4} C_{1,g}(k) \right).$$
A straight-forward induction yields the more explicit bound
$$C_{i, g}(k) \leq \sum_{j = 0}^{i-1} \frac{k-\frac{j}{2}}{s_{g-j}}.$$
\begin{proposition}
For a symmetric positive semi-definite matrix $T \in {\rm{Sym}}^{g}\left(\mathbb{Q}\right)$ we write $\lambda_i(T)$ for its $i$-th successive minimum, that is, the smallest number such that there are $i$-many linear independent integral vectors $v_1, \ldots, v_i \in \mathbb{Z}^g$ with $T(v_j, v_j) \leq \lambda_i(T)$. Let $f \in {\rm{M}}_{k,\Lambda}^g$ and assume that the Fourier coefficients $c_{T, \underline{\mu}}(f)$ vanish for all $T \geq 0$ with $\lambda_i(T) \leq C_{i, g}(k)$. Then $f = 0$.
\end{proposition}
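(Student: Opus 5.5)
The plan is to fix $i$ and argue by induction on $i$, letting the genus $g$, the weight $k$ and the lattice $\Lambda$ vary at the same time. The induction has to run over all even lattices, because the step passes from $\Lambda$ to $\Lambda\oplus\langle-\ell^2m\rangle$. I will prove the contrapositive: if $f\in{\rm{M}}^g_{k,\Lambda}$ is nonzero, then there is a $T\geq0$ with $c_{T,\underline{\mu}}(f)\neq0$ for some $\underline{\mu}$ and $\lambda_i(T)\leq C_{i,g}(k)$. The recursion defining $C_{i+1,g}$ mirrors the step: the term $C_{1,g}(k)$ bounds the order of the Fourier--Jacobi expansion, and the term $C_{i,g-1}(k-\tfrac12)+\tfrac14C_{1,g}(k)$ comes from the theta decomposition.

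\emph{Base case $i=1$.} Let $f\neq0$. By \cite{BW15}*{Proposition 1.4} (applied, if necessary, to a scalar-valued full-level form built from the components of $f$, such as a symmetrized product) we have $m:={\rm{ord}}(f)\leq k/s_g=C_{1,g}(k)$. Choose $\mu_2$ with $f_{m,\mu_2}\neq0$. Then some coefficient $c_{T,\underline{\mu}}(f)$ is nonzero with $T_{gg}=m$. Since $T(e_g,e_g)=m$, we get $\lambda_1(T)\leq C_{1,g}(k)$.

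\emph{Inductive step $i\to i+1$.} Let $f\neq0$, and take $m={\rm{ord}}(f)\leq C_{1,g}(k)$ and $\mu_2$ as above.
\begin{enumerate}
\item The Jacobi form $f_{m,\mu_2}$ is nonzero, so the genus $g-1$ form $\psi:=\psi_{m,\mu_2}$ is a nonzero form of weight $k-\tfrac12$ for the lattice $\Lambda\oplus\langle-\ell^2m\rangle$.
\item By the induction hypothesis, some coefficient of $\psi$ is nonzero at an index $T'$ with $\lambda_i(T')\leq C_{i,g-1}(k-\tfrac12)$. By \eqref{eq:ThetaDecompFC}, this gives a nonzero coefficient $c_{T,\underline{\mu}}(f)$ with
\[
T=\begin{pmatrix}T_1&T_{12}/2\\ T_{12}^t/2&m\end{pmatrix},\qquad T'=T_1-\tfrac{1}{4m}T_{12}T_{12}^t .
\]
\item For $x\in\mathbb{Z}^{g-1}$ and $y\in\mathbb{Z}$, completing the square gives
\[
T\big((x,y),(x,y)\big)=T'(x,x)+m\Big(y+\tfrac{x^tT_{12}}{2m}\Big)^2 .
\]
Choosing $y$ to be the integer nearest to $-x^tT_{12}/2m$ makes the second term at most $m/4$.
\item Take linearly independent $x_1,\dots,x_i$ realizing $\lambda_i(T')$, with their optimal $y_j$, and add $e_g$. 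The $i+1$ vectors $(x_1,y_1),\dots,(x_i,y_i),e_g$ are linearly independent, because the $x_j$ are independent in the first $g-1$ coordinates. Their values under $T$ are bounded by
\[
\max\Big(C_{i,g-1}(k-\tfrac12)+\tfrac14C_{1,g}(k),\;C_{1,g}(k)\Big)=C_{i+1,g}(k).
\]
Hence $\lambda_{i+1}(T)\leq C_{i+1,g}(k)$ while $c_{T,\underline{\mu}}(f)\neq0$, which contradicts the hypothesis.
\end{enumerate}

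The main obstacle is the degenerate case $m={\rm{ord}}(f)=0$. Index-zero Jacobi forms are independent of $z$, so there is no theta decomposition and no drop of weight to $k-\tfrac12$. In this case I would apply the Siegel $\Phi$-operator instead. The form $f_{0,\mu_2}$ is itself a nonzero genus $g-1$ form of weight $k$. Every $T$ in its support has last row zero, so $T((x,y),(x,y))=T_1(x,x)$ and $e_g$ contributes the value $0$. Any nonzero coefficient of $f_{0,\mu_2}$ then gives $\lambda_{i+1}(T)\leq\lambda_i(T_1)$.

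The catch is that the inductive bound for $f_{0,\mu_2}$ is $C_{i,g-1}(k)$, not $C_{i,g-1}(k-\tfrac12)$, so this case needs a separate argument. I would first try to show $C_{i,g-1}(k)\leq C_{i+1,g}(k)$ in this situation, using the explicit form $\sum_j (k-\tfrac j2)/s_{g-j}$ together with the growth of the $s_g$. Failing that, I would reorder the variables so that a row with nonzero diagonal entry $m>0$ is split off instead. The check of convergence and non-degeneracy of the theta decomposition for vector-valued forms I treat as given, by \cite{Ziegler}.
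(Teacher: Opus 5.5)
For $m=\operatorname{ord}(f)>0$ your argument is the paper's proof. You pass to the Fourier--Jacobi coefficient of minimal index, theta-decompose it into a genus $g-1$ form of weight $k-\tfrac12$, apply induction, and bound $\lambda_{i+1}(T)\le\max\bigl(m,\lambda_i(T')+\tfrac m4\bigr)$. Your completing-the-square computation is the justification the paper leaves implicit.

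There is one small adjustment. The statement, and the finite set in the Remark, require a single $T$ satisfying all $g$ bounds at once, which is what the paper's induction on $g$ produces. Your induction with $i$ fixed gives a $T$ depending on $i$. The same construction does give the simultaneous version: carry all bounds for $T'$ in the induction hypothesis, and take $x_1,\dots,x_{g-1}$ realizing all successive minima of $T'$ simultaneously.

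The genuine gap is the case $m=0$, which you correctly isolate but do not close. The paper's proof has the same hole: it theta-decomposes $f_{m,\mu_2}$ and divides by $4m$ without excluding $m=0$.

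\emph{Second fallback.} This one cannot work. Because $c_{A^tTA,\underline{\mu}A}(f)=c_{T,\underline{\mu}}(f)$ for $A\in\mathrm{GL}_g(\mathbb{Z})$, the minimal Fourier--Jacobi index is the same along every primitive direction. The Bruinier--Westerholt-Raum bound controls only that minimal index. So a positive diagonal entry split off in another direction carries no bound by $C_{1,g}(k)$.

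\emph{First fallback.} This is the right idea, but not via the explicit sum. That sum is only an upper bound for $C_{i,g}$, so it cannot supply the lower bound on $C_{i+1,g}(k)$ you need. Working with the recursion itself, an induction on $i$ reduces $C_{i,g-1}(k)\le C_{i+1,g}(k)$ to $C_{1,g'-1}\le C_{1,g'}$ for $g'\le g$. That is exactly the monotonicity $s_{g'}\le s_{g'-1}$ of the slopes of $\mathcal{A}_g$. Without it, already the case $i=1$ requires $k\ge 2s_g/s_{g-1}$. Neither your proposal nor the paper supplies this input.

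To complete the proof you must do one of two things:
\begin{enumerate}
\item prove or assume that monotonicity of the $s_g$;
\item enlarge the constants by adding $C_{i,g-1}(k)$ to the maximum defining $C_{i+1,g}(k)$.
\end{enumerate}
The second option makes the $m=0$ step immediate via $\lambda_{i+1}(T)\le\lambda_i(T_1)\le C_{i,g-1}(k)$. It also keeps $S_{k,g,\Lambda}$ finite, so the finite generation theorem is unaffected.
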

\begin{proof}
We proof this by induction on the genus $g$. For $g = 1$, this directly follows from \cite{BW15}*{Proposition 2.4}. Assume now $g > 1$ and let $f \neq 0$. We show that there is some index $T, \underline{\mu}$ with $\lambda_i(T) \leq C_{i, g}(k)$ and $c_{T, \underline{\mu}}(f) \neq 0$. According to \cite{BW15}*{Proposition 2.4}, there is some $m \leq \frac{k}{s_g} = C_{1,g}(k)$ with $f_{m, \mu_2} \neq 0$. Using the theta decomposition of Siegel-Jacobi forms, the function $f_{m, \mu_2}$ can be identified with a vector-valued Siegel modular form $\psi_{m, \mu_2} \in {\rm{M}}_{k - \frac{1}{2},\Lambda \oplus \langle \ell^2 m\rangle}^{g-1}$, where $\ell$ is the order of $\mu_2$ and the Fourier coefficients are related by \eqref{eq:ThetaDecompFC}. By the non-triviality of $f_{m, \mu_2}$ and the induction hypothesis, there is some index $T_1, (\mu_1, T_{12})$ with $\lambda_i(S) \leq C_{i, g-1}(k - 1/2)$ such that $c_{S, (\mu_1, T_{12})}(\psi_{m, \mu_2}) = c_{T, \underline{\mu}}(f) \neq 0$, where $S = T_1 - T_{12} T_{12}^t / (4m)$. Now,
$$\lambda_{i+1}(T) \leq \max\left(m, \lambda_i(S) + \frac{m}{4} \right) \leq C_{i+1, g}(k)$$
for $1 \leq i \leq g$ and also $\lambda_1(T) \leq m \leq C_{1,g}(k)$, which proves the proposition.
\end{proof}
\begin{remark}
Since $c_{A^{t}TA, \underline{\mu} A}(f) = c_{T, \underline{\mu}}(f)$ for any $A \in \operatorname{GL}_g(\mathbb{Z})$, it suffices to check the vanishing of $c_{T, \underline{\mu}}(f)$ for a set of representatives of the finite set
\[
S_{k,g, \Lambda} = \left\{ (T, \underline{\mu}) \in {\rm{Sym}}^g\left(\mathbb{Q}\right)\times A_\Lambda^g\ \bigg\vert\ T \in {\rm{S}}^g - q_\Lambda(\underline{\mu}), T \geq 0, \lambda_i(T) \leq C_{i, g}(k) \right\} \big/ \operatorname{GL}_n(\mathbb{Z}),
\]
\end{remark}
An immediate consequence of \cite{BW15}*{Proposition 2.4} and the existence of the linear map \eqref{sec2:eq:psi_g} is the following finite generation result, generalizing \cite{Zha09}*{Section 3}.
\begin{theorem}[Bruinier--Westerholt-Raum]
The subspace ${\rm{SC}}^g(\mathcal{F}_\Lambda)\subset{\rm{CH}}_{\mathbb{Q}}^g\left(\mathcal{F}_{\Lambda}\right)$ generated by the cycles $Z_{T,\underline{\mu}}\cdot \lambda^{g-{\rm{rk}}(T)}$ is finite dimensional generated by the finite set
\[
\left\{Z_{T,\underline{\mu}}\cdot \lambda^{g-{\rm{rk}}(T)}\ \bigg\vert\ T \in S_{\frac{{\rm{rk}}(\Lambda)}{2}, g, \Lambda}\right\}.
\]
\end{theorem}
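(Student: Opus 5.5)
The argument is pure linear algebra: dualize the vanishing criterion of the preceding Proposition through the map $\Psi_\Lambda^g$ of \eqref{sec2:eq:psi_g}. Put $k={\rm{rk}}(\Lambda)/2$. Since $\Theta^g_{{\rm{KM}},\Lambda}$ is a Siegel modular form in ${\rm{M}}^g_{k,\Lambda}$ with coefficients in ${\rm{CH}}^g(\mathcal{F}_\Lambda)$, every cycle $Z_{T,\underline{\mu}}\cdot(-\lambda)^{g-{\rm{rk}}(T)}$ is the image of the coefficient functional $c_{T,\underline{\mu}}$. Hence ${\rm{SC}}^g(\mathcal{F}_\Lambda)\otimes\mathbb{C}$ is the image of the span of all $c_{T,\underline{\mu}}$ inside $({\rm{M}}^g_{k,\Lambda})^\vee$. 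This space is finite dimensional because ${\rm{M}}^g_{k,\Lambda}$ is, which already gives the first claim.

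For the explicit generating set, let $W\subset ({\rm{M}}^g_{k,\Lambda})^\vee$ be spanned by the $c_{T,\underline{\mu}}$ with $\lambda_i(T)\le C_{i,g}(k)$ for all $i$. By the Proposition, any $f$ annihilated by $W$ is zero. Since $({\rm{M}}^g_{k,\Lambda})^\vee$ is finite dimensional, this forces $W=({\rm{M}}^g_{k,\Lambda})^\vee$. So every $c_{T',\underline{\mu}'}$ is a $\mathbb{C}$-linear combination of functionals in $W$. Applying $\Psi^g_\Lambda$ then writes every $Z_{T',\underline{\mu}'}\cdot\lambda^{g-{\rm{rk}}(T')}$ as a combination of the cycles indexed by $W$, with the signs $(-1)^{g-{\rm{rk}}}$ absorbed into the coefficients.

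By the Remark, $c_{A^tTA,\underline{\mu}A}=c_{T,\underline{\mu}}$ for $A\in{\rm{GL}}_g(\mathbb{Z})$. The successive minima are ${\rm{GL}}_g(\mathbb{Z})$-invariant, and correspondingly $Z_{A^tTA,\underline{\mu}A}=Z_{T,\underline{\mu}}$, since $\underline{x}\mapsto\underline{x}A$ permutes the tuples and preserves spans. So it is enough to take representatives of the set $S_{k,g,\Lambda}$. This set is finite because the bounded successive minima, together with the denominators of $T$ being bounded by the level, leave only finitely many reduced $T$.

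The main obstacle is passing from $\mathbb{C}$ to $\mathbb{Q}$ coefficients, because $\Psi^g_\Lambda$ lands in ${\rm{CH}}^g\otimes\mathbb{C}$. To handle this, I would use the result that ${\rm{M}}^g_{k,\Lambda}$ has a basis with rational Fourier coefficients (Bruinier--Westerholt-Raum). Consider the finite set of relevant coefficient functionals restricted to the rational subspace. The linear relation expressing $c_{T',\underline{\mu}'}$ in terms of the functionals in $W$ is the solution of a linear system with rational entries. It can therefore be chosen with rational coefficients, which gives the relation in ${\rm{CH}}^g_{\mathbb{Q}}$.
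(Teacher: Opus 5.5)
Your proposal is correct and follows the paper's proof. Like the paper, you dualize the vanishing Proposition through $\Psi^g_\Lambda$ to get that the functionals $c_{T,\underline{\mu}}$ with bounded successive minima span $({\rm{M}}^g_{k,\Lambda})^\vee$, then apply $\Psi^g_\Lambda$ and reduce to ${\rm{GL}}_g(\mathbb{Z})$-representatives. Your extra care with the $\mathbb{C}$-to-$\mathbb{Q}$ descent, which the paper passes over, is valid but needs no rational basis of modular forms: the cycles already lie in ${\rm{CH}}^g_{\mathbb{Q}}$, and a vector of ${\rm{CH}}^g_{\mathbb{Q}}$ lying in the $\mathbb{C}$-span of finitely many such vectors in ${\rm{CH}}^g_{\mathbb{Q}}\otimes\mathbb{C}$ already lies in their $\mathbb{Q}$-span.
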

\begin{proof}
We fix $k={\rm{rk}}(\Lambda)/2$. Via the $\mathbb{C}$-linear map \eqref{sec2:eq:psi_g} it is enough to show that $\left({\rm{M}}_{k,\Lambda}^g\right)^\vee$ is generated by the coefficient extraction functionals $c_{T,\underline{\mu}}$ with $T \in S_{k, g, \Lambda}$, which follows from the previous proposition. We conclude by observing that the image of the linear map \eqref{sec2:eq:psi_g} is ${\rm{SC}}^g\left(\mathcal{F}_{\Lambda}\right)$.
\end{proof}

As a corollary we have:

\begin{corollary}
\label{sec2:prop:generation}
Let $\Lambda$ be an even lattice of signature $(2,n)$ with $n\geq3$ splitting off two copies of the hyperbolic plane. Then ${\rm{Pic}}_{\mathbb{Q}}\left(\mathcal{F}_\Lambda\right)$ is generated by the set
\begin{equation}
\label{sec2:eq:genH}
\left\{H_{m,\mu}\left|\;\mu\in A_\Lambda, 0\leq m\leq \frac{{\rm{rk}}(\Lambda)}{24}\right.\right\}.
\end{equation}
\end{corollary}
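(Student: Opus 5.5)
The plan is to specialize the genus-one case of the preceding theorem and combine it with the isomorphism \eqref{sec2:eq:iso}. Fix $k=1+n/2={\rm{rk}}(\Lambda)/2$. Since $\Lambda$ splits off two hyperbolic planes and $n\geq 3$, the map $\Psi_\Lambda:{\rm{M}}_{k,\Lambda}^\circ(\mathbb{Q})^\vee\to{\rm{Pic}}_{\mathbb{Q}}(\mathcal{F}_\Lambda)$ is an isomorphism by \cite{Bru02}*{Theorem 0.4}, \cite{BLMM17}*{Theorem 1.8}. It therefore suffices to show that the dual space ${\rm{M}}_{k,\Lambda}^\circ(\mathbb{Q})^\vee$ is spanned by the coefficient functionals $c_{m,\mu}$ with $0\leq m\leq {\rm{rk}}(\Lambda)/24$. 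Equivalently, we show that any $f\in{\rm{M}}_{k,\Lambda}^\circ$ whose coefficients $\alpha_{m,\mu}$ vanish for all such $(m,\mu)$ must be zero. By linear algebra over $\mathbb{Q}$, this spanning statement for the finite-dimensional space follows once the common kernel of these functionals is trivial. Under $\Psi_\Lambda$ the functional $c_{m,\mu}$ maps to $H_{m,\mu}$, with $H_{0,0}=-\lambda$, so the set \eqref{sec2:eq:genH} spans.

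The key input is the genus-one vanishing statement, that is, the $g=1$ case of the Proposition above, which rests on \cite{BW15}*{Proposition 2.4}. In genus one, the only successive minimum of $T=m$ is $m$ itself, and $C_{1,1}(k)=k/s_1$. The slope of elliptic modular forms is $s_1=12$, attained by $\Delta$. Hence
\[
C_{1,1}(k)=\frac{k}{12}=\frac{{\rm{rk}}(\Lambda)}{24},
\]
which is exactly the bound appearing in \eqref{sec2:eq:genH}. So if $f\in{\rm{M}}_{k,\Lambda}$ has $\alpha_{m,\mu}(f)=0$ for all $m\leq{\rm{rk}}(\Lambda)/24$, then $f=0$. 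Restricting to the subspace of almost cusp forms gives the required injectivity.

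For completeness, we recall why the bound of \cite{BW15}*{Proposition 2.4} holds. Suppose $f\neq 0$ and let $m_0$ be its vanishing order at $\infty$. Take the symmetrization
\[
F=\prod_{\gamma\in{\rm{Mp}}_2(\mathbb{Z})/\ker}\big(f|_k\gamma\big)_{\text{components}},
\]
more precisely, form a suitable norm or product over the finite image of $\rho^*_\Lambda$ of a nonzero linear combination of components. This produces a nonzero scalar modular form of weight $Nk$ for ${\rm{SL}}_2(\mathbb{Z})$ with vanishing order at least $N m_0$. The valence formula, i.e. the slope bound $s_1=12$, then gives $Nm_0\leq Nk/12$, hence $m_0\leq k/12$. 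The delicate point in this argument is that every component must be controlled, not only one. The statement needed is that some $\alpha_{m,\mu}$ with $m\leq k/12$ is nonzero. This holds because the product construction uses the minimal order across all components, and the Weil representation permutes components up to roots of unity.

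The only step I expect to need care is the verification of the hypotheses for \eqref{sec2:eq:iso}: that the target is all of ${\rm{Pic}}_{\mathbb{Q}}$ and not just the span of Heegner divisors. This is exactly where the assumptions $n\geq3$ and the two hyperbolic planes are used, and it is supplied by the cited results. Everything else is a direct specialization of the preceding theorem to $g=1$.
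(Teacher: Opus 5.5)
Your proposal is correct and is the paper's argument: the $g=1$ case of the vanishing proposition (the Bruinier--Westerholt-Raum bound) with $C_{1,1}(k)=k/s_1=\mathrm{rk}(\Lambda)/24$, combined with the surjectivity of $\Psi_\Lambda$ and the convention $H_{0,0}=-\lambda$. One minor correction to your optional sketch of that bound: $\rho_\Lambda^*$ does not permute components up to roots of unity, since the image of $S$ mixes them. What suffices is that each component of $f|_k\gamma=\rho_\Lambda^*(\gamma)f$ is a linear combination of components of $f$, and hence vanishes to order at least $m_0$ at $\infty$.
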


\begin{proof}
The corollary follows from the surjectivity of \eqref{sec2:eq:iso} and the fact that $s_1= 12$.
\end{proof}

\begin{remark}
\label{sec2:rmk:H=P}
We would like to stress again that $H_{0,0}$ is not a divisor, but a class defined as $-\lambda$ by convention. Note also that by \eqref{sec2:eq:H=P}, the set \eqref{sec2:eq:genH} with Heegner divisors $H_{m,\mu}$ replaced by primitive Heegner divisors $P_{m,\mu}$ is also a generating set for ${\rm{Pic}}_{\mathbb{Q}}\left(\mathcal{F}_\Lambda\right)$. 
\end{remark}

Under the light of \eqref{sec2:eq:iso}, relations among Fourier coefficients of modular forms in ${\rm{M}}_{k,\Lambda}^\circ$ lead to relations among Heegner divisors. One of the most fundamental sources of relations among Fourier coefficients of modular forms comes from the Residue Theorem. See for instance \cite{Bor99}*{Theorem 3.1}. This can be seen as an instance of Serre duality and they emanate from the fact that if $f\in {\rm{M}}_k^!(\rho)$ is a vector-valued weakly holomorphic modular form of weight $k$ with respect to some finite-dimensional representation $\rho$ of ${\rm{Mp}}_2(\mathbb{Z})$, then taking the residue at the cusp at infinity induces a pairing 
\begin{equation}
\label{sec2:eq:Res_pair}
{\rm{M}}_{k_1}^!(\rho)\times{\rm{M}}_{k_2}^!(\rho^*)\longrightarrow{\rm{Res}}_{\infty}(f(\tau)\cdot g(\tau) d\tau):=\frac{1}{2\pi i}{\rm{Res}}_0\left((f\cdot g)(q)q^{-1} dq\right).
\end{equation}
Here $\rho^*$ is the dual representation making $f\cdot g$ a scalar-valued weakly holomorphic modular form. When the weight $k_1+k_2=2$, the local form $(f\cdot g)d\tau$ defines a global meromorphic form on $\mathbb{P}^1$ with vanishing residue if the poles of $f\cdot g$ are concentrated at the cusp $\infty$. By identifying $\rho_\Lambda^*$ with $\rho_{\Lambda(-1)}$ and $(A_{\Lambda(-1)},q_{\Lambda(-1)})$ with $(A_\Lambda,-q_\Lambda)$ we obtain the following.

\begin{proposition}
\label{sec2:prop:rels}
Let $\Lambda$ be an even lattice of signature $(2,n)$, fix $k={\rm{rk}}(\Lambda)/2$ and let $f\in {\rm{M}}_{2-k,\Lambda(-1)}^!$ be a meromorphic modular form of weight $2-k$ with respect to the Weil representation $\rho_\Lambda$, and with poles concentrated at the cusp at infinity. Assume $f$ has Fourier expansion
\[
f(\tau)=\sum_{\substack{m,\mu\\
-M\leq m\\\mu\in A_{\Lambda}}}\alpha_{m,\mu}q^m\frak{e}_\mu.
\]
Then the following relation holds in ${\rm{Pic}}\left(\mathcal{F}_\Lambda\right)\otimes\mathbb{C}$:
\begin{equation}
\label{sec2:eq:univ_relation}
\sum_{\substack{0\leq m\leq M\\\mu\in A_\Lambda}}\alpha_{-m,\mu}H_{m,\mu}=0.
\end{equation}
\end{proposition}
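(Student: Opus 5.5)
The relation will come from pairing $f$ with the Kudla--Millson generating series $\Theta_{{\rm KM},\Lambda}$ from \eqref{sec2:eq:KM}, via the residue pairing \eqref{sec2:eq:Res_pair}. We know that $\Theta_{{\rm KM},\Lambda}\in{\rm Pic}_{\mathbb{Q}}(\mathcal{F}_\Lambda)\otimes{\rm M}^\circ_{k,\Lambda}(\mathbb{Q})$ with $k={\rm rk}(\Lambda)/2=1+n/2$. Here ${\rm M}_{k,\Lambda}$ denotes forms for the dual Weil representation $\rho_\Lambda^*$, while $f$ transforms under $\rho_\Lambda$, which we identify with $\rho^*_{\Lambda(-1)}$. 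So we may choose a basis $D_1,\dots,D_r$ of ${\rm Pic}_{\mathbb{Q}}(\mathcal{F}_\Lambda)$ and write
\[
\Theta_{{\rm KM},\Lambda}=\sum_j D_j\otimes g_j,\qquad g_j\in{\rm M}_{k,\Lambda},
\]
with $H_{m,\mu}=\sum_j c_{m,\mu}(g_j)D_j$. In particular $H_{0,0}=-\lambda$ appears as the $(0,0)$ coefficient, consistent with the convention of the paper.

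Next, for each $j$ I form the scalar function $\langle f,g_j\rangle=\sum_{\mu}f_\mu\, g_{j,\mu}$, where the natural pairing $\mathbb{C}[A_\Lambda]\times\mathbb{C}[A_\Lambda]\to\mathbb{C}$ is $\langle\mathfrak e_\mu,\mathfrak e_\nu\rangle=\delta_{\mu\nu}$. Since $\rho_\Lambda$ is unitary with respect to this standard Hermitian structure, $\rho_\Lambda$ and $\rho^*_\Lambda$ are dual under this bilinear pairing. Hence $\langle f,g_j\rangle$ is a scalar weakly holomorphic modular form for ${\rm Mp}_2(\mathbb{Z})$ of weight $(2-k)+k=2$. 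The metaplectic signs cancel because the total weight is integral, so the function genuinely lives on ${\rm SL}_2(\mathbb{Z})$. Its poles lie only at $\infty$, because $g_j$ is holomorphic on $\mathbb{H}\cup\{\infty\}$ and $f$ is holomorphic on $\mathbb{H}$.

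The key step is then the residue theorem. The form $\langle f,g_j\rangle\,d\tau$ descends to a meromorphic differential on the compactified modular curve $X(1)\cong\mathbb{P}^1$, holomorphic away from the cusp. At the elliptic points, $d\tau$ and the weight-$2$ transformation law are compatible, so there are no extra residues there. The local parameter at the cusp is $q$, with $d\tau=\frac{1}{2\pi i}\frac{dq}{q}$, so the only residue is the constant term of the $q$-expansion of $\langle f,g_j\rangle$. This constant term must therefore vanish:
\[
0=\sum_{\mu}\sum_{m}\alpha_{-m,\mu}\,c_{m,\mu}(g_j).
\]
Only exponents summing to zero contribute, and $g_j$ has only $m\ge0$, so the sum runs over $0\le m\le M$. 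The one point needing care is that the index sets match. The coefficients of $f$ are supported on $m\in\mathbb{Z}+q_\Lambda(\mu)$, since $f$ is for $\rho_\Lambda$, while those of $g_j$ lie in $\mathbb{Z}-q_\Lambda(\mu)$. So $-m$ in the first set corresponds exactly to $m$ in the second, and fractional exponents pair to integer ones.

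Finally I multiply the identity by $D_j$ and sum over $j$:
\[
0=\sum_{0\le m\le M,\ \mu}\alpha_{-m,\mu}\sum_j c_{m,\mu}(g_j)D_j=\sum\alpha_{-m,\mu}H_{m,\mu}.
\]
Extending scalars to $\mathbb{C}$, since $f$ may have complex coefficients, gives the relation \eqref{sec2:eq:univ_relation} in ${\rm Pic}(\mathcal{F}_\Lambda)\otimes\mathbb{C}$. The main obstacle is just the bookkeeping of the duality between $\rho_\Lambda$ and $\rho_\Lambda^*$ and the matching of exponents; the analytic input is only the residue theorem on $\mathbb{P}^1$.
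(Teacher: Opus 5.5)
Your proposal is correct and is essentially the paper's argument: the paper observes that the residue pairing of $f$ with every $g\in{\rm M}^\circ_{k,\Lambda}$ vanishes, so the functional $\sum_{0\le m\le M,\,\mu}\alpha_{-m,\mu}c_{m,\mu}$ is zero on ${\rm M}^\circ_{k,\Lambda}$, and then applies $\Psi_\Lambda\otimes\mathbb{C}$ to obtain the relation. Your decomposition $\Theta_{{\rm KM},\Lambda}=\sum_j D_j\otimes g_j$ simply unpacks $\Psi_\Lambda$; any finite such expression suffices, so you do not need a basis of all of ${\rm Pic}_{\mathbb{Q}}(\mathcal{F}_\Lambda)$. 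Your checks on the duality of $\rho_\Lambda$ and $\rho_\Lambda^*$, on the elliptic points, and on the exponent matching fill in details the paper leaves implicit.
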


\begin{proof}
By the Residue Theorem the pairing \eqref{sec2:eq:Res_pair} restricted to $\{f\}\times {\rm{M}}_{k,\Lambda}^\circ$ is trivial. In particular for all $g\in {\rm{M}}_{k,\Lambda}^\circ$ one has that 
\[
{\rm{Res}}_{\infty}\left(f\cdot g d\tau\right)=\sum_{\substack{0\leq m\leq M\\\mu\in A_\Lambda}}\alpha_{-m,\mu}c_{m,\mu}(g)=0,
\]
which via $\Psi_\Lambda\otimes \mathbb{C}$ descends to \eqref{sec2:eq:univ_relation} on ${\rm{Pic}}\left(\mathcal{F}_\Lambda\right)\otimes\mathbb{C}$.
\end{proof}

Theorems \ref{intro:thm:thm2} and \ref{intro:thm:thm3} now follow from Proposition \ref{sec2:prop:rels}. Using the formula in \cite{BK01}*{Theorem 7} one obtains the following:

\begin{corollary}
\label{sec2:thm:lambda}
Let $\Lambda$ be an even lattice of signature $(2,n)$ and fix $k={\rm{rk}}(\Lambda)/2$. Then the following relations holds in ${\rm{Pic}}_{\mathbb{Q}}\left(\mathcal{F}_\Lambda\right)$:
\begin{enumerate}
\item Let $N = \lfloor {\rm{rk}}(\Lambda)/24 \rfloor + 1$. Relation \eqref{sec2:eq:univ_relation} holds with $M=N$ and
\begin{equation*}
\alpha_{-m,\mu}=\sum_{\substack{n_1+n_2=-m\\\mu\in A_\Lambda\\ n_2\equiv q_\Lambda(\mu)\mod\mathbb{Z}}}P_{24N}(n_1+N)\cdot c_{n_2,\mu}(E),
\end{equation*}

where $P_M(n)$ counts the number of partitions of $n$ as a sum of $M$ partitions; 
\begin{equation}
\label{intro:eq:P}
P_M(n)=\sum_{k_1+\ldots+k_M=n}p(k_1)\cdots p(k_M),
\end{equation}
and $E$ is the vector-valued Eisenstein series defined in \eqref{sec2:eq:E} of weight $1+12N-n/2$ attached to any lattice $\widetilde{\Lambda}$ with discriminant group isomorphic to $\left(A_{\Lambda(-1)},q_{\Lambda(-1)}\right)=\left(A_\Lambda,-q_\Lambda\right)$.
\item Assume that there exists a positive-definite lattice $\widetilde{\Lambda}$ of rank $2-n+24N$ such that $\left(A_{\widetilde{\Lambda}},q_{\widetilde{\Lambda}}\right)\cong\left(A_\Lambda,-q_\Lambda\right)$. Then relation \eqref{sec2:eq:univ_relation} holds with $M=N$ and
\begin{equation*}
\alpha_{-m,\mu}=\sum_{\substack{n_1+n_2=-m\\\mu\in A_\Lambda\\ n_2\equiv q_\Lambda(\mu)\mod\mathbb{Z}}}P_{24N}(n_1+N)\cdot \theta_{n_2,\mu}(\widetilde{\Lambda}),
\end{equation*}
where $\theta_{n_2,\mu}$ is the number of elements in the class $\mu+\widetilde{\Lambda}$ of square $n_2$.
\end{enumerate}

\end{corollary}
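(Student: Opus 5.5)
The plan is to apply Proposition \ref{sec2:prop:rels} to one explicit weakly holomorphic form $f$ of weight $2-k$ for $\rho_\Lambda$ whose only pole sits at the cusp $\infty$. The natural candidate is
\[
f(\tau)=\frac{G(\tau)}{\Delta(\tau)^N},
\]
where $\Delta$ is the discriminant cusp form of weight $12$ and $G$ is a holomorphic vector-valued form of weight $2-k+12N$ for $\rho_\Lambda=\rho_{\Lambda(-1)}^*$. Since $\Delta$ is scalar-valued of trivial multiplier and nonvanishing on $\mathbb{H}$, $f$ is weakly holomorphic of the right type, and its pole order at $\infty$ is at most $N$. So $M=N$ works.

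First I would check the weight. With $k=(n+2)/2$, we get $2-k+12N=1+12N-n/2$, which matches the weight in the statement. Choosing $N=\lfloor {\rm{rk}}(\Lambda)/24\rfloor+1$ makes $12N>k-1$, so this weight exceeds $2$, and by $N>{\rm rk}/24$ in fact exceeds $k/2$. The Eisenstein series \eqref{sec2:eq:E} then exists once the parity condition $2(1+12N-n/2)\equiv b^+-b^-\mod 4$ holds for a lattice $\widetilde\Lambda$ with discriminant form $(A_\Lambda,-q_\Lambda)$.

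For the parity I would use Milgram's formula: the signature of $\widetilde\Lambda$ is $\equiv -(2-n)\mod 8$. Hence we may take $\widetilde\Lambda$ positive definite of rank $n-2+8j$, or in general a lattice of signature $(b^+,b^-)$ with $b^+-b^-\equiv n-2\mod 8$. Then $2+24N-n\equiv 2-n\equiv -(n-2)$, and since $24N\equiv 0\mod 8$ the needed congruence reduces to $-(n-2)\equiv n-2\mod 4$. This holds because $2n-4$ is even, and I still need to confirm it is divisible by $4$. Here $n$ must be even, which is true since $2k={\rm rk}$ and $k$ is half-integral with weight matching; otherwise I would adjust the sign convention of the dual. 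I expect this bookkeeping of signs and duals to be the main obstacle. In particular, I need to identify $\rho^*_{\widetilde\Lambda}$ with $\rho_\Lambda$ exactly so that $E=E_{\widetilde\Lambda}$ transforms correctly.

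Next I would expand. We have $\Delta^{-N}=q^{-N}\prod(1-q^n)^{-24N}=\sum_{j\ge0}P_{24N}(j)q^{j-N}$, since $\prod(1-q^n)^{-1}=\sum p(j)q^j$ and the $24N$-fold product gives \eqref{intro:eq:P}. Multiplying by $G=E=\sum c_{n_2,\mu}(E)q^{n_2}\mathfrak{e}_\mu$, the coefficient of $q^{-m}\mathfrak{e}_\mu$ is $\sum_{n_1+n_2=-m}P_{24N}(n_1+N)c_{n_2,\mu}(E)$, where $n_1=j-N$. Feeding this into \eqref{sec2:eq:univ_relation} gives part (1). The coefficients are rational by \cite{BK01}, so the relation descends from ${\rm Pic}\otimes\mathbb{C}$ to ${\rm Pic}_{\mathbb{Q}}$; concretely, the functional pairing with $f$ is rational, so we can apply $\Psi_\Lambda$ over $\mathbb{Q}$.

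For part (2) I would replace $G$ by the theta series $\Theta_{\widetilde\Lambda}=\sum_{\mu}\sum_{x\in\mu+\widetilde\Lambda}q^{q(x)}\mathfrak{e}_\mu$ of the positive-definite lattice $\widetilde\Lambda$ of rank $2-n+24N$. This is holomorphic of weight ${\rm rk}(\widetilde\Lambda)/2=1+12N-n/2$ for $\rho_{\widetilde\Lambda}^*\cong\rho_\Lambda$, the last identification coming from $(A_{\widetilde\Lambda},q)\cong(A_\Lambda,-q_\Lambda)$. The same expansion then yields the formula with $\theta_{n_2,\mu}(\widetilde\Lambda)$, and its coefficients are integers, so the relation holds in ${\rm Pic}_{\mathbb{Q}}$ directly.
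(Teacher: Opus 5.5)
Your construction is the paper's: $f=\Delta^{-N}E$ and $f=\Delta^{-N}\Theta_{\widetilde\Lambda}$ fed into Proposition~\ref{sec2:prop:rels}. Your expansion $\Delta^{-N}=\sum_{j\ge0}P_{24N}(j)q^{j-N}$, the pole-order bound $M=N$, and the descent to ${\rm Pic}_{\mathbb{Q}}$ are all fine. The trouble is the sign bookkeeping you flagged, and in part (2) it produces a false step.

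\textbf{Part (2).} A positive-definite theta series $\sum_{x\in\mu+\widetilde\Lambda}q^{q(x)}\mathfrak{e}_\mu$ has $\mathfrak{e}_\mu$-exponents in $q_{\widetilde\Lambda}(\mu)+\mathbb{Z}$. So it transforms under $\rho_{\widetilde\Lambda}$, not $\rho^*_{\widetilde\Lambda}$; in the paper's convention, $\rho^*$-forms have exponents in $\mathbb{Z}-q(\mu)$. If $(A_{\widetilde\Lambda},q_{\widetilde\Lambda})\cong(A_\Lambda,-q_\Lambda)$, its exponents therefore lie in $-q_\Lambda(\mu)+\mathbb{Z}$. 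But $f$ must have exponents in $q_\Lambda(\mu)+\mathbb{Z}$ to pair with the Heegner coefficients $c_{m,\mu}$; this is the condition $n_2\equiv q_\Lambda(\mu)$ in the statement. So $\Delta^{-N}\Theta_{\widetilde\Lambda}$ is a $\rho_\Lambda^*$-form, and Proposition~\ref{sec2:prop:rels} does not apply.

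Your own Milgram computation shows the hypothesis as written is essentially never met. A positive-definite lattice with discriminant form $(A_\Lambda,-q_\Lambda)$ has rank $\equiv n-2 \pmod 8$, while $2-n+24N\equiv 2-n\pmod 8$. This forces $n\equiv 2\pmod 4$, so it never holds for $\mathcal{F}_{2d}$, where $n=19$.

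The slip is inherited from the statement. The repair is how the paper actually uses (2) in Theorem~\ref{intro:thm:thm3} with $\ell^\perp\subset E_8(-1)$:
\begin{itemize}
\item take $\widetilde\Lambda$ \emph{negative}-definite with discriminant form $(A_\Lambda,-q_\Lambda)$;
\item let $\theta_{n_2,\mu}$ count $x\in\mu+\widetilde\Lambda$ with $-q(x)=n_2$.
\end{itemize}
Equivalently, use the positive-definite lattice $\widetilde\Lambda(-1)$, whose discriminant form is $(A_\Lambda,q_\Lambda)$. Then the theta series is a holomorphic $\rho_\Lambda$-form of weight $1+12N-n/2$ with exponents in $q_\Lambda(\mu)+\mathbb{Z}$, Milgram's formula is consistent, and the rest of your argument goes through verbatim.

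\textbf{Part (1).} The conclusion is right, but your justification is not. The congruence $2w\equiv b^+-b^-\pmod 4$ you borrowed is the one for $\rho$, not $\rho^*$. For $E$ to be a nonzero form for $\rho^*_{\widetilde\Lambda}\cong\rho_\Lambda$ of weight $w=1+12N-n/2$, one needs $2w\equiv b^+(\Lambda)-b^-(\Lambda)=2-n\pmod 4$. This is automatic, since $2w=2-n+24N$. No parity condition on $n$ is needed, and your claim that $n$ must be even is false ($n=19$ in the main application).

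Likewise, $w>2$ is equivalent to $12N>k$, which holds because $24N>{\rm rk}(\Lambda)=2k$; the bound $12N>k-1$ you wrote only gives $w>1$. Your aside that $w>k/2$ is false in general: ${\rm rk}(\Lambda)=21$, $N=1$ gives $w=7/2<21/4$. It is not needed.
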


\begin{proof}
Taking $f\in {\rm{M}}_{2-k,\Lambda(-1)}^!$ as the product
\[
f=\left(\Delta(\tau)\right)^{-N}\cdot E_{2-k+12N,\widetilde{\Lambda}}(\tau),
\] 
one obtains the first equation. Note that the fact that the coefficients $\alpha_{m,\mu}$ are rational is \cite{BK01}*{Corollary 8}. The second equality follows by taking 
\[
f=\left(\Delta(\tau)\right)^{-N}\cdot \Theta_{\widetilde{\Lambda}}(\tau),
\]
where $\Theta_{\widetilde{\Lambda}}$ is the vector-valued theta series attached to $\widetilde{\Lambda}$ of weight $\rm{rk}(\widetilde{\Lambda})/2$.
\end{proof}

\begin{remark}
    Write $\ell(A_\Lambda)$ for the minimal number of generators of the discriminant form $A_\Lambda$. Then, according to \cite{Nikulin}*{Corollary 1.10.2}, there is an even positive definite lattice $\widetilde{\Lambda}$ as in Corollary \ref{sec2:thm:lambda}, if $2 - n + 24N > \ell(A_\Lambda)$.
\end{remark}

Theorems \ref{intro:thm:main1}, \ref{intro:thm:thm2}, and \ref{intro:thm:thm3}  now follow from Proposition \ref{sec2:prop:generation} and Corollary \ref{sec2:thm:lambda}.

\begin{proof}[Proof of Theorem \ref{intro:thm:thm2} and \ref{intro:thm:thm3}]
Theorem \ref{intro:thm:thm2} follows from Corollary \ref{sec2:thm:lambda} by taking $N=1$, and $\widetilde{\Lambda}=U^{\oplus 3}\oplus\mathbb{Z}\ell^*$, and Theorem \ref{intro:thm:thm3} follows by taking $\widetilde{\Lambda}$ as the orthogonal complement in $E_8$ of an element of square $2d$.
\end{proof}

\begin{proof}[Proof of Theorem \ref{intro:thm:main1}]
By Proposition \ref{sec2:prop:generation} with $\Lambda$ the $2d$-polarized K3 lattice $U^{\oplus 2}\oplus E_8(-1)^{\oplus{2}}\oplus \mathbb{Z}\ell$, one obtains that ${\rm{Pic}}_{\mathbb{Q}}\left(\mathcal{F}_{2d}\right)$ is generated by $\lambda$ and $H_{m,\mu}$ with $m\leq 21/24$. By Corollary \ref{sec2:thm:lambda}, the Hodge class $\lambda$ can be expressed as a $\mathbb{Q}$-linear combination of $H_{m,\mu}$'s with $m\leq 1$. Generation by NL-divisors follows from \eqref{sec2:eq:H=P} and the established relation between NL divisors and primitive Heegner divisors.
\end{proof}

The same argument leads to the following more general statement that applies, for example, to  moduli spaces of (lattice) polarized hyperk\"{a}hler varieties.

\begin{theorem}
\label{sec2:thm:general}
Let $\Lambda$ be an even lattice of signature $(2,n)$ splitting off two copies of the hyperbolic planes. Then the Picard group ${\rm{Pic}}_{\mathbb{Q}}\left(\mathcal{F}_\Lambda\right)$ is generated by primitive Heegner divisors $P_{m,\mu}$ with $0<m\leq \lfloor {\rm{rk}}(\Lambda)/24\rfloor + 1$.
\end{theorem}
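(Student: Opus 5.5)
The plan is to run the argument used for Theorem \ref{intro:thm:main1} in general. It has three steps: first generate the Picard group by Heegner divisors of bounded norm together with $\lambda$; then eliminate $\lambda$ using a Borcherds-type relation; finally pass from Heegner to primitive Heegner divisors by M\"obius inversion.

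First, since $\Lambda$ splits off $U^{\oplus 2}$, the map $\Psi_\Lambda$ of \eqref{sec2:eq:iso} is an isomorphism. By Corollary \ref{sec2:prop:generation}, ${\rm{Pic}}_{\mathbb{Q}}(\mathcal{F}_\Lambda)$ is then generated by the classes $H_{m,\mu}$ with $0\leq m\leq {\rm{rk}}(\Lambda)/24$. Among these, the only index with $m=0$ that is not already a divisor is $H_{0,0}=-\lambda$. The indices $H_{0,\mu}$ with $\mu\neq 0$ isotropic correspond to coefficients that vanish on almost cusp forms, so they are zero via $\Psi_\Lambda$. Since ${\rm{rk}}(\Lambda)/24< N:=\lfloor{\rm{rk}}(\Lambda)/24\rfloor+1$, every generator other than $\lambda$ has $0<m\leq N$.

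Second, I eliminate $\lambda$ using Corollary \ref{sec2:thm:lambda}(1) with this $N$. Concretely, I take $f=\Delta^{-N}E_{2-k+12N,\widetilde{\Lambda}}$, where $\widetilde{\Lambda}$ is any even lattice with discriminant form $(A_\Lambda,-q_\Lambda)$; one may choose $\widetilde{\Lambda}$ so that the signature congruence needed for the Eisenstein series holds, since only the discriminant form matters. The resulting relation \eqref{sec2:eq:univ_relation} reads
\[
\alpha_{0,0}H_{0,0}+\sum_{\substack{0<m\leq N\\ \mu}}\alpha_{-m,\mu}H_{m,\mu}=0 .
\]
The coefficient of $H_{0,0}$ is $\alpha_{0,0}=P_{24N}(N)c_{0,0}(E)+\sum_{j\geq 1}P_{24N}(N-j)c_{j,0}(E)$. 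Because $c_{0,0}(E)=1$ and $P_{24N}(N)>0$, I need this coefficient to be nonzero. The remaining Eisenstein coefficients $c_{j,0}(E)$ are nonnegative, by the positivity of Eisenstein coefficients in \cite{BK01}, and the partition numbers are positive. Hence $\alpha_{0,0}>0$, and $\lambda$ lies in the span of the $H_{m,\mu}$ with $0<m\leq N$. Checking this nonvanishing, in particular the sign of the Eisenstein coefficients in the weight $2-k+12N$ range, is the step I expect to need the most care. If positivity of the relevant coefficients is not available directly from \cite{BK01}, I would instead use the theta version, Corollary \ref{sec2:thm:lambda}(2). In that version all coefficients are nonnegative counts, and the coefficient of $\mathfrak{e}_0q^0$ in $\Theta_{\widetilde{\Lambda}}$ equals $1$. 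A positive-definite $\widetilde{\Lambda}$ exists by the remark following Corollary \ref{sec2:thm:lambda}, after enlarging $N$ if necessary; to keep the bound $m\leq N$ one wants the minimal $N$, so the Eisenstein route is preferred.

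Third, \eqref{sec2:eq:H=P} expresses each $H_{m,\mu}$ as a sum of primitive $P_{m/s^2,\delta}$ with $0<m/s^2\leq m\leq N$. Therefore the span of $\{H_{m,\mu}:0<m\leq N\}$ is contained in the span of $\{P_{m,\mu}:0<m\leq N\}$, which gives the theorem.
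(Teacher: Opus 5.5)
Your proposal is correct and follows the paper's own proof: Corollary~\ref{sec2:prop:generation}, then Corollary~\ref{sec2:thm:lambda}(1) with $N=\lfloor{\rm rk}(\Lambda)/24\rfloor+1$ to eliminate $\lambda$ (the paper takes $\widetilde{\Lambda}=\Lambda(-1)$), and finally M\"obius inversion \eqref{sec2:eq:H=P}. Your explicit check that $\alpha_{0,0}>0$ fills in a step the paper leaves implicit, and the positivity you invoke holds here: $2(2-k+12N)\equiv 2-n \pmod 8$ places $E$ in the positive, theta-like sign regime, whereas the Eisenstein series at the Kudla--Millson weight $k$ has negative coefficients.
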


\begin{proof}
The theorem follows from Proposition \ref{sec2:prop:generation} and Corollary \ref{sec2:thm:lambda} with $N=\lfloor {\rm{rk}}(\Lambda)/24\rfloor + 1$ and $\widetilde{\Lambda}=\Lambda(-1)$.
\end{proof}

\section{Slope of divisors on the moduli space of cubic fourfolds}
\label{sec:cubics}
Let $\mathcal{M}_0$ be the moduli space of smooth cubic fourfolds and $\mathcal{M}$ the partial compactification by allowing at worst simple singularities (in the sense of Arnold), see \cite{Laz09}. The {\it{extended period map}} \cites{Voi86, Voi08, Loo09, Laz10} is given by an open immersion 
\begin{equation}
\label{sec4:eq:P}
\mathcal{P}:\mathcal{M}\longrightarrow \mathcal{D}\big/\Gamma,
\end{equation}
where $\mathcal{D}$ is the period domain for cubics, that is, the space of polarized Hodge structures of K3-type on the abstract lattice 
\begin{equation}
\label{sec4:eq:H4=L}
H^4(X,\mathbb{Z})_{\rm{prim}}\left(-1
\right)\cong \Lambda_{\rm{cubic}} : = U^{\oplus 2}\oplus E_{8}(-1)\oplus A_2(-1).
\end{equation}
The arithmetic group $\Gamma$ is the monodromy group that after the isomorphism \eqref{sec4:eq:H4=L} can be identified with $\Gamma_{\Lambda_{\rm{cubic}}}$, see \cites{Ebe84, Bea86}. We denote by $u,v$ the standard basis of $A_{2}(-1)$. After taking the closure of the image via $\mathcal{P}$, boundary $\mathcal{M}\setminus \mathcal{M}_0$ is sent to $\mathcal{C}_6=H_{1,0}$, and complement of the image of \eqref{sec2:eq:P} is given by $\mathcal{C}_{2}=H_{\frac{1}{3},\frac{2u+v}{3}}$. In this case ${\rm{Pic}}_{\mathbb{Q}}\left(\mathcal{D}\big/\Gamma\right)$ is two dimensional and via the isomorphism
\[ \left({\rm{Mod}}_{11,\Lambda_{\rm{cubic}}}^\circ\right)^\vee\cong{\rm{Pic}}_{\mathbb{Q}}\left(\mathcal{D}\big/\Gamma\right)
\]
identifying $H_{m,\mu}$ with the corresponding Fourier coefficient extraction functional, one obtains the following relation between $\mathcal{C}_2, \mathcal{C}_6$ and $\lambda$:
\begin{equation}
\label{sec4:eq:CCL}
\mathcal{C}_6=96\lambda-54\mathcal{C}_{2}.
\end{equation}
Further, by \cite{BBFW25} the cone generated by positive linear combinations of Hassett divisors (or NL divisors) is generated by $\mathcal{C}_2$ and $\mathcal{C}_6$:
\begin{equation}
\label{sec2:eq:NL-cone}
{\rm{Eff}}^{NL}\left(\mathcal{D}\big/\Gamma\right):=\left\langle \mathcal{C}_d\left|\;\; d\equiv 0,2\mod 6\right.\right\rangle_{\mathbb{Q}_{\geq 0}}=\left\{\alpha\mathcal{C}_2+\beta\mathcal{C}_6\left|\;\;\alpha,\beta\in\mathbb{Q}_{\geq 0}\right.\right\}.
\end{equation}
The divisor $\mathcal{C}_2$ gets contracted after a small map resolving the rational map $\mathcal{D}\big/\Gamma\dashrightarrow\overline{\mathcal{M}}$ to the GIT compactification of $\mathcal{M}$, see \cite{Laz10}*{Section 6}. In particular it is extremal, i.e. in the boundary of $\overline{\rm{Eff}}\left(\mathcal{D}\big/\Gamma\right)$.  

\begin{definition}
Let $E$ be an effective divisor on $\mathcal{M}$. We define the {\textit{slope of $E$}} as the quotient 
\[
s(E)=\begin{cases}\frac{\alpha}{\beta},\;\;\; \hbox{if }\alpha,\beta>0 \\
\infty,\;\;\;\hbox{otherwise.}
\end{cases}
\]
where $\overline{E}=\alpha \lambda-\beta \mathcal{C}_2$ and $\overline{E}$ is the closure of $E$ in $\mathcal{D}\big/\Gamma$. Similarly, we define the slope of $\mathcal{M}$ as
\[
s(\mathcal{M}):=\inf\left\{s(E)\left|\;\; E\hbox{ effective divisor in }\mathcal{M}\right.\right\}. 
\]
\end{definition}

Note that $s(\mathcal{M})\leq s(\mathcal{C}_6)=16/9$ and from \cite{BBFW25} (see \eqref{sec2:eq:NL-cone}) one has that any effective divisor $D$ supported on Hassett divisors satisfies $s(D)\geq 16/9$. 

Theorem \ref{intro:thm:s(M)} follows from the following observation.

\begin{lemma}
\label{sec4:lemma:gamma}
Fix $\varepsilon:\overline{\mathcal{D}\big/\Gamma}\longrightarrow\overline{\mathcal{D}\big/\Gamma}^{\rm{BB}}$ a normal $\mathbb{Q}$-factorial compactification of $\mathcal{D}\big/\Gamma$ over the Baily--Borel compactification and $\gamma\in N_1\left(\overline{\mathcal{D}\big/\Gamma}\right)_\mathbb{Q}$ the class of a curve moving over an algebraic base birationally covering an irreducible NL divisor $\mathcal{C}_d$ and intersecting trivially the divisoral part of the boundary of $\overline{\mathcal{D}\big/\Gamma}$. Then
\[
\frac{\overline{\mathcal{C}}_2\cdot \gamma}{\varepsilon^*\lambda\cdot\gamma}\leq s(\mathcal{M}),
\]
where $\overline{\mathcal{C}}_2$ stands for the closure of $\mathcal{C}_2$ in $\overline{\mathcal{D}\big/\Gamma}$.
\end{lemma}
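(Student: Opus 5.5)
Let $E$ be an effective divisor on $\mathcal{M}$, and write $\overline{E}$ for its closure in $\mathcal{D}/\Gamma$, with $\overline{E}=\alpha\lambda-\beta\mathcal{C}_2$. If $s(E)=\infty$ there is nothing to prove, so we assume $\alpha,\beta>0$. The goal is to show
\[
\frac{\overline{\mathcal{C}}_2\cdot\gamma}{\varepsilon^*\lambda\cdot\gamma}\leq\frac{\alpha}{\beta}.
\]
The approach is to pull the relation $\overline{E}=\alpha\lambda-\beta\mathcal{C}_2$ back to $\overline{\mathcal{D}/\Gamma}$ and intersect it with the moving curve $\gamma$.

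\textbf{Step 1: the relation on the compactification.} Let $\widehat{E}$ be the closure of $\overline{E}$ in $\overline{\mathcal{D}/\Gamma}$; it is $\mathbb{Q}$-Cartier because the compactification is $\mathbb{Q}$-factorial. The class $\widehat{E}-\alpha\varepsilon^*\lambda+\beta\overline{\mathcal{C}}_2$ restricts to zero on the open part $\mathcal{D}/\Gamma$. By the excision sequence for class groups, it is therefore a $\mathbb{Q}$-combination $\sum_i c_iB_i$ of the irreducible divisorial components $B_i$ of the boundary $\overline{\mathcal{D}/\Gamma}\setminus\mathcal{D}/\Gamma$. Here we use that $\lambda$ extends as the pullback of the ample $\mathbb{Q}$-line bundle on the Baily--Borel compactification, and that $\mathcal{C}_2$ and $\overline{E}$ are closures of divisors. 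Since $\gamma\cdot B_i=0$ by hypothesis, we obtain
\[
\widehat{E}\cdot\gamma=\alpha\,\varepsilon^*\lambda\cdot\gamma-\beta\,\overline{\mathcal{C}}_2\cdot\gamma.
\]

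\textbf{Step 2: nonnegativity of $\widehat{E}\cdot\gamma$.} This step carries the main geometric content. Since $\gamma$ moves in a family birationally covering $\mathcal{C}_d$, a general member of the family is not contained in $\widehat{E}$, provided $\widehat{E}$ does not contain $\overline{\mathcal{C}}_d$. Then $\widehat{E}\cdot\gamma\geq0$, because $\widehat{E}$ is effective.

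It remains to handle the components of $\widehat{E}$ that could contain $\overline{\mathcal{C}}_d$. Since $E$ is a divisor on $\mathcal{M}$ and $\mathcal{C}_d$ is irreducible, such a component can only be $\overline{\mathcal{C}}_d$ itself. I would split $\overline{E}=\overline{E}'+c\,\mathcal{C}_d$ with $c\geq0$ and $\overline{E}'$ not containing $\mathcal{C}_d$. Applying the argument above to $\overline{E}'$, and using that $\mathcal{C}_d$ is a Hassett divisor whose slope is at least $16/9$, reduces the claim to checking $\overline{\mathcal{C}}_2\cdot\gamma/\varepsilon^*\lambda\cdot\gamma\leq s(D)$ for NL divisors $D$.

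For that case I would argue directly from \eqref{sec2:eq:NL-cone}. A positive combination $a\mathcal{C}_2+b\mathcal{C}_6$ has slope $16/9$ when $b>0$ and $a\leq 54b$, and slope $\infty$ otherwise, so only $s(\mathcal{C}_6)=16/9$ needs comparing. Here I expect the bound to be loose and to follow from the explicit intersection numbers computed later. Alternatively, a cleaner route is to note that the excluded case only involves divisors supported on NL divisors, whose slopes are already at least $16/9$. The infimum over $E$ is then either attained outside $\mathcal{C}_d$-containing divisors or is larger anyway. I would make this reduction precise by taking the minimum of the two bounds.

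\textbf{Step 3: conclusion.} Given $\widehat{E}\cdot\gamma\geq0$, we get $\beta\,\overline{\mathcal{C}}_2\cdot\gamma\leq\alpha\,\varepsilon^*\lambda\cdot\gamma$. Since $\varepsilon^*\lambda$ is the pullback of an ample class and $\gamma$ is not contracted by $\varepsilon$ (it moves in the interior), we have $\varepsilon^*\lambda\cdot\gamma>0$. Dividing by $\beta\,\varepsilon^*\lambda\cdot\gamma$ and taking the infimum over $E$ gives the lemma.

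The main obstacle is Step 2, specifically the case where $E$ contains the covered divisor $\mathcal{C}_d$.
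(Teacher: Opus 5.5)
Your argument is essentially the paper's. You write the closure of $E$ as $\alpha\,\varepsilon^*\lambda-\beta\,\overline{\mathcal{C}}_2$ plus a boundary-supported class that $\gamma$ kills. You get $\overline{E}\cdot\gamma\geq 0$ from the moving-curve property whenever $\overline{\mathcal{C}}_d$ is not a component. The remaining Noether--Lefschetz part is handled by the $16/9$ bound from the NL cone.

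The only difference is bookkeeping. The paper reduces to irreducible $D$ via $s(D_1+D_2)\geq\min\{s(D_1),s(D_2)\}$ and separates NL-type from non-NL-type divisors. You split off just the $\mathcal{C}_d$-component, using the same mediant inequality implicitly; you should state it.

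Both versions actually prove $s(\mathcal{M})\geq\min\{r,16/9\}$, where $r=\overline{\mathcal{C}}_2\cdot\gamma/\varepsilon^*\lambda\cdot\gamma$. The lemma as stated needs $r\leq 16/9$ on top of that. The paper's proof passes over this silently; you flagged it correctly.

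Here is how the loose end closes.
\begin{itemize}
\item If $d\neq 6$, your Step 2 applies to $\overline{\mathcal{C}}_6$, which does not contain $\overline{\mathcal{C}}_d$. Combined with $\mathcal{C}_6=96\lambda-54\mathcal{C}_2$ and your Step 1, this gives $96\,\varepsilon^*\lambda\cdot\gamma-54\,\overline{\mathcal{C}}_2\cdot\gamma=\overline{\mathcal{C}}_6\cdot\gamma\geq 0$, that is, $r\leq 16/9$.
\item If $d=6$, which is the case used for Theorem~\ref{intro:thm:s(M)}, the inequality $r\leq 16/9$ is equivalent to $\overline{\mathcal{C}}_6\cdot\gamma\geq 0$ for a curve moving inside $\overline{\mathcal{C}}_6$. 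The hypotheses do not supply this. It has to be read off from the explicit value $r=523777/206215591$, as you propose.
\end{itemize}

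One small slip: $a\mathcal{C}_2+b\mathcal{C}_6=96b\,\lambda-(54b-a)\,\mathcal{C}_2$ has slope $96b/(54b-a)$. This is at least $16/9$, with equality only when $a=0$; it is not $16/9$ for all $a\leq 54b$. Only the lower bound is needed, so the argument is unaffected.
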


\begin{proof}
Note first that if $D=D_1+D_2$ is an effective decomposition, then $s(D)\geq \min\{s(D_1),s(D_2)\}$ and to bound $s(\mathcal{M})$, we can assume $D$ is irreducible. Further, if $D$ is of NL-type, then by \cite{BBFW25} (see \eqref{sec2:eq:NL-cone}) one has that $s(D)\geq 16/9$. Assume $D\not\in{\rm{Eff}}^{NL}\left(\mathcal{D}\big/\Gamma\right)$. Then $\overline{D}\cdot\gamma\geq 0$ where $\overline{D}$ is the closure of $D$ in $\overline{\mathcal{D}\big/\Gamma}$. Indeed, if $\gamma\cdot\overline{D}<0$, since $\gamma$ is a covering curve for some $\overline{\mathcal{C}}_d$, then by our irreducibility assumption, after restricting to the interior $D$ and $\mathcal{C}_d$ would have to be proportional, but $D$ is assumed to be outside the NL-cone. If we write $\overline{D}=\alpha\varepsilon^*\lambda-\beta\overline{\mathcal{C}}_2+E$ with $E$ some divisor (not necessarily effective) supported on the boundary of $\overline{\mathcal{D}\big/\Gamma}$, then $\overline{D}\cdot\gamma\geq 0$ and $E\cdot\gamma=0$ implies the inequality.
\end{proof}

\begin{proof}[Proof of Theorem \ref{intro:thm:s(M)}]
Consider the finite map $\phi:\mathcal{F}_{6}\longrightarrow \mathcal{D}\big/\Gamma$ induced by the embedding $\Lambda_{6}\hookrightarrow \Lambda_{\rm{cubic}}$ sending $\ell$ to $u-v$. The image of the map $\phi$ is the divisor $\mathcal{C}_6$. Let $\overline{(\mathcal{D}\big/\Gamma)}^{tor}$ be a toroidal compactification of $(\mathcal{D}\big/\Gamma)$, and $\overline{\mathcal{F}}_6$ a normal $\mathbb{Q}$-factorial compactification of $\mathcal{F}_6$ resolving the rational map $\overline{\mathcal{F}}_6^{BB}\dasharrow \overline{(\mathcal{D}\big/\Gamma)}^{tor}$. The relevant diagram is the following:     
\begin{equation}
\label{sec4:eq:phi}
\begin{tikzcd}
\overline{\mathcal{F}}_6\arrow[d, "\delta"']\arrow[r, "\overline{\phi}"]&\overline{(\mathcal{D}\big/\Gamma)}^{tor}\arrow[d, "\varepsilon"]\\
\overline{\mathcal{F}}_6^{BB}\arrow[dashed, ur]\arrow[r, "\overline{\phi}^{BB}"']&\overline{(\mathcal{D}\big/\Gamma)}^{BB}.
\end{tikzcd}
\end{equation}
The Hodge class $\lambda$ is ample on $\overline{(\mathcal{D}\big/\Gamma)}^{BB}$. Consider the curve class $\gamma=\overline{\mathcal{C}}_6\cdot\left(\varepsilon^*\lambda\right)^{18}$. Since the pull-back $\left(\overline{\phi}^{BB}\right)^*\lambda$ is the Hodge class $\lambda'$ on $\overline{\mathcal{F}}_6^{BB}$, by the projection formula
\begin{equation}
\label{sec3:eq:proof_slope}
\frac{\overline{\mathcal{C}}_2\cdot \gamma}{\varepsilon^*\lambda\cdot\gamma}=\frac{\overline{\phi}^*\left(\overline{\mathcal{C}}_2\right)\cdot \left(\delta^*\lambda'\right)^{18}}{\left(\delta^*\lambda'\right)^{19}}=\frac{{\rm{deg}}\left(\overline{\phi}^*\left(\overline{\mathcal{C}}_2\right)\right)}{{\rm{vol}}\left(\overline{\mathcal{F}}_6\right)},
\end{equation}
where ${\rm{deg}}(\cdot)$ stands for the Baily--Borel degree of the divisor $\overline{\phi}^*\left(\overline{\mathcal{C}}_2\right)$, and ${\rm{vol}}\left(\overline{\mathcal{F}}_6\right)$ the volume of the orthogonal modular variety $\mathcal{F}_6$ with respect to the K\"{a}hler form $\omega=c_1(\lambda')$. Further, from \cite{Kud97}*{Section 9} (see also \cite{BFZ25}*{Proposition 2.2}) one obtains
\[
\overline{\phi}^*(\overline{\mathcal{C}}_2)=H_{\frac{1}{3},2\ell_*}.
\]
Finally, from \cite{Kud03}*{Theorem I} the ratio \eqref{sec3:eq:proof_slope} is given by minus the $q^{1/3}$-coefficient of the $\frak{e}_{2\ell_*}$-component of the vector-valued Eisenstein series defined in \eqref{sec2:eq:E} of weight $21/2$ attached to $\Lambda_6$. This can be computed using \cite{weilrep}, see also \cite{Wil18}:
\[
\left(E_{\frac{21}{2}, \Lambda_6}\right)_{\frak{e}_{2\ell_*}}=-\frac{523777}{206215591}q^{1/3} - \frac{274609995265}{206215591}q^{4/3} - \frac{55921251768096}{206215591}q^{7/3} +\ldots
\]

\end{proof}

\subsection{K3 surfaces of degree $2$} The picture for the moduli space $\mathcal{F}_2$ parameterizing K3 surfaces of degree $2$ is analogous to the one of cubic fourfolds, see \cite{Sha80}. Indeed, if one sees K3 surfaces of degree $2$ as sextic double planes with at worst isolated rational double points, then the moduli space $\mathcal{N}$ of such sextic double planes admits an embedding
\[
\mathcal{N}\longrightarrow D_{\Lambda_2}\big/\Gamma_{\Lambda_2}=\mathcal{F}_2,
\]
with boundary in $\mathcal{F}_2$ given exactly by the unigonal divisor $D_{1,1}$, see \cite{Sha80}*{Corollary 6.2}. This divisor is extremal in the effective cone ${\rm{Eff}}\left(\mathcal{F}_2\right)$ as it gets contracted to a point via the rational map to the GIT moduli space of plane sextics. 

\begin{definition}
Let $E$ be an effective divisor on $\mathcal{F}_2$. We define the {\textit{slope of $E$}} as the quotient 
\[
s(E)=\begin{cases}\frac{\alpha}{\beta},\;\;\; \hbox{if }\alpha,\beta>0 \\
\infty,\;\;\;\hbox{otherwise.}
\end{cases}
\]
where $E=\alpha \lambda-\beta \mathcal{D}_{1,1}$. Similarly, we define the slope of $\mathcal{F}_2$ as
\[
s(\mathcal{F}_2):=\inf\left\{s(E)\left|\;\; E\hbox{ effective divisor in }\mathcal{F}_2\right.\right\}. 
\]
\end{definition}

\begin{proposition}
\label{sec4:prop:K3_2}
The slope of the moduli space of K3 surfaces of degree $2$ satisfies the following bound:
\[
\frac{1}{1984}\leq s(\mathcal{F}_2)\leq \frac{150}{57}
\]
\end{proposition}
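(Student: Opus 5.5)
The plan is to copy the proof of Theorem \ref{intro:thm:s(M)} with the pair $(\mathcal{D}/\Gamma,\mathcal{C}_2)$ replaced by $(\mathcal{F}_2,D_{1,1})$. Write $\Lambda_2=U^{\oplus 2}\oplus E_8(-1)^{\oplus 2}\oplus\mathbb{Z}\ell$ with $q(\ell)=-1$. Then $A_{\Lambda_2}\cong\mathbb{Z}/2$ is generated by $\ell_*$ with $q(\ell_*)=-1/4$. In the indexing of Section \ref{section2}, the unigonal divisor is $D_{1,1}=H_{\frac14,\ell_*}$ (here $a=b=1$, so $m=1/4$), and the nodal divisor is $H_{1,0}$. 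Since $\rho(\mathcal{F}_2)=2$, the space $\mathrm{M}^\circ_{21/2,\Lambda_2}$ is $\mathbb{C}E_{21/2,\Lambda_2}\oplus \mathrm{S}_{21/2,\Lambda_2}$ with a one-dimensional cusp space, and $\Psi_{\Lambda_2}$ is an isomorphism. Every class in $\mathrm{Pic}_{\mathbb{Q}}(\mathcal{F}_2)$ is therefore determined by its values on $E$ and on a normalized cusp form $F$.

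\textbf{Upper bound.} I would write $H_{1,0}=\alpha\lambda-\beta D_{1,1}$ by solving a $2\times 2$ linear system. The coefficient vectors are
\[
\lambda\leftrightarrow(-c_{0,0}(E),-c_{0,0}(F))=(-1,0),\qquad D_{1,1}\leftrightarrow(c_{\frac14,\ell_*}(E),c_{\frac14,\ell_*}(F)),\qquad H_{1,0}\leftrightarrow(c_{1,0}(E),c_{1,0}(F)).
\]
The required coefficients would be read off with the \texttt{weilrep} package, as in the proof of Theorem \ref{intro:thm:s(M)}. I expect this to give $\alpha,\beta>0$ with $s(H_{1,0})=150/57$, and since $H_{1,0}$ is effective this yields $s(\mathcal{F}_2)\le 150/57$. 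If the nodal divisor turns out to give a worse value, I would instead test the other low-discriminant Heegner divisors $H_{m,\mu}$ with $m\le 1$. By Corollary \ref{sec2:prop:generation} these suffice, and the boundary ray of the NL-cone from \cite{BBFW25} identifies which one is extremal.

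\textbf{Lower bound.} First I would prove the analogue of Lemma \ref{sec4:lemma:gamma} for $\mathcal{F}_2$ with the same argument, since it only uses irreducibility, the NL-cone, and the fact that a covering curve meets every other irreducible divisor nonnegatively. Then I would take as covering family the curves $\gamma=\overline{H}_{1,0}\cdot(\varepsilon^*\lambda)^{18}$ on the nodal divisor. That divisor is the image of $\phi:\mathcal{F}_{\Lambda'}\to\mathcal{F}_2$, where $\Lambda'=r^\perp\subset\Lambda_2$ for a $(-2)$-root $r$, so $\Lambda'\cong U^{\oplus 2}\oplus E_8(-1)\oplus E_7(-1)\oplus\langle -2\rangle$ with $A_{\Lambda'}\cong(\mathbb{Z}/2)^2$. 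By the projection formula, exactly as in \eqref{sec3:eq:proof_slope}, the ratio $\overline{D}_{1,1}\cdot\gamma/\varepsilon^*\lambda\cdot\gamma$ equals $\deg\phi^*\overline{D}_{1,1}/\mathrm{vol}(\mathcal{F}_{\Lambda'})$.

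By Kudla's pullback formula (\cite{Kud97}, \cite{BFZ25}*{Proposition 2.2}), $\phi^*D_{1,1}$ is the sum of the Heegner divisors $H_{\frac14,\mu'}$ on $\mathcal{F}_{\Lambda'}$ over the classes $\mu'\in A_{\Lambda'}$ that map to $\ell_*$ under $\Lambda'\oplus\mathbb{Z}r\subset\Lambda_2$. By \cite{Kud03}*{Theorem I}, the degree-to-volume ratio is then minus the sum of the corresponding $q^{1/4}$-coefficients of $E_{10,\Lambda'}$. I expect this to evaluate to $1/1984$.

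\textbf{Main obstacle.} The main difficulty is the pullback step. I must correctly identify the cosets $\mu'$ and check the multiplicity conventions, namely the factor two when $\mu=-\mu$, and whether the self-intersection part of $\phi^*H_{1,0}$ interferes; it does not affect $\phi^*D_{1,1}$ because $D_{1,1}\neq H_{1,0}$. Two further checks remain: that $\gamma$ avoids the divisorial toroidal boundary, which holds because $\lambda^{18}$ is pulled back from Baily--Borel where the boundary has codimension at least $2$, and that the Eisenstein coefficient is computed for the right discriminant form.
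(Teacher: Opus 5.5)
Your lower bound follows the paper's route. You use a covering family on the nodal divisor via $\phi:\mathcal{F}_{r^\perp}\to\mathcal{F}_2$ for a root $r$ of divisibility one; your $\Lambda'$ is isomorphic to the paper's $L=(e-f)^\perp$. Kudla's pullback then gives a single divisor, $\phi^*D_{1,1}=H^{L}_{\frac14,\ell_*}$, and Kudla's Eisenstein formula gives the degree-to-volume ratio. One slip: $\dim\mathcal{F}_2=19$, so the covering curve is $\overline{P}_{1,0}\cdot(\varepsilon^*\lambda)^{17}$, not exponent $18$.

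The genuine gap is the identification of the nodal divisor with $H_{1,0}$, which breaks your upper bound. Since $2\ell_*=0$ in $A_{\Lambda_2}$, the inversion formula \eqref{sec2:eq:H=P} gives $H_{1,0}=P_{1,0}+P_{\frac14,\ell_*}=P_{1,0}+D_{1,1}$. Geometrically, on a unigonal K3 surface $H-2E$ is a $(-2)$-class orthogonal to $H$, so $D_{1,1}$ is a component of $H_{1,0}$. The irreducible nodal divisor is $P_{1,0}$: it is the image of your own $\phi$ and the minimal-slope NL divisor from \cite{BBFW25}. If $P_{1,0}=\alpha\lambda-\beta D_{1,1}$, then $H_{1,0}=\alpha\lambda-(\beta-1)D_{1,1}$ has strictly larger slope, so your $2\times 2$ system cannot return $150/57$.

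Concretely, one has $c_{\frac14,\ell_*}(E)=-50/174611$ and $c_{1,0}(E)=-50\cdot 523777/174611$. The normalized cusp form is $F=q^{1/4}\mathfrak{e}_{\ell_*}-56\,q\,\mathfrak{e}_0+\cdots$, where $-56=a(2)-2^9$ comes from $\Delta E_8$ under the Shimura correspondence. These give $H_{1,0}=150\lambda-56D_{1,1}$, hence only $s(\mathcal{F}_2)\le 150/56$.

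Your fallback does not repair this. The only Heegner divisors with $0<m\le 1$ are $D_{1,1}$ and $H_{1,0}$. Corollary \ref{sec2:prop:generation} is about spanning $\mathrm{Pic}_{\mathbb{Q}}$, not about which effective divisor minimizes the slope. The fix is to run your linear system on the functional $c_{1,0}-c_{\frac14,\ell_*}$ attached to the primitive divisor $P_{1,0}$. This yields $P_{1,0}=150\lambda-57D_{1,1}$, the effective divisor the paper uses for the bound $s(\mathcal{F}_2)\le 150/57$.
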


The proof is analogous to that of Theorem \ref{intro:thm:s(M)}.

\begin{proof}[Proof of Proposition \ref{sec4:prop:K3_2}]
Let $\Lambda_2$ be the lattice defined in \eqref{sec2:eq:K3lattice}, and $L=U\oplus \mathbb{Z}(e+f)\oplus E_8(-1)^{\oplus 2}\oplus \mathbb{Z}\ell$ the orthogonal complement of $e-f\in\Lambda_2$, where $\{e,f\}$ are the standard generators of one copy of $U$. The induced map
\[
\phi:\mathcal{D}_L\big/\Gamma_L\longrightarrow \mathcal{F}_2
\]
is finite onto the irreducible divisor $P_{1,0}$ and $s(P_{1,0})=\frac{150}{57}$. We keep the same notation as in \eqref{sec4:eq:phi}. By \cite{BBFW25}, any effective divisor $E$ supported on NL divisors has slope $s(E)\geq s(P_{1,0})$. In particular, the same proof as Lemma \ref{sec4:lemma:gamma} leads to the inequality
\[
\frac{\overline{D}_{1,1}\cdot \gamma}{\varepsilon^*\lambda\cdot\gamma}\leq s\left(\mathcal{F}_2\right)
\]
for any curve $\gamma$ covering an NL divisor. Taking $\gamma=\overline{\mathcal{P}}_{1,0}\cdot\left(\varepsilon^*\lambda\right)^{17}$, by the projection formula and \cite{BFZ25}*{Proposition 2.2} one obtains
\[
\frac{\overline{D}_{1,1}\cdot \gamma}{\varepsilon^*\lambda\cdot\gamma}=\frac{\overline{\phi}^*\left(\overline{D}_{1,1}\right)\cdot \left(\delta^*\lambda'\right)^{17}}{\left(\delta^*\lambda'\right)^{18}}=\frac{{\rm{deg}}\left(H_{\frac{1}{4},\ell_*}^L\right)}{{\rm{vol}}\left(\mathcal{D}_L\big/\Gamma_L\right)}\leq s\left(\mathcal{F}_2\right).
\]
Finally, by \cite{Kud03}*{Theorem I} and using \cites{weilrep, Wil18}, one obtains the desired lower bound.
\end{proof}

\bibliography{Bibliography}
\bibliographystyle{alpha}
\end{document}